\newtheorem{thm}{Theorem}
\newtheorem{lemma}{Lemma}
\title{Asymptotic properties of solutions to a certain ultrahyperbolic equation}
\author{M. N. Demchenko\footnote{St.~Petersburg Department of
V.\,A.~Steklov Institute of Mathematics of
the Russian Academy of Sciences, 
27 Fontanka, St.~Petersburg, Russia. E-mail: demchenko@pdmi.ras.ru.\newline
\indent The research was supported by Russian Foundation for Basic Research, grant No. 20-01-00627-a.}}
\date{}
\begin{document}
\maketitle
\begin{abstract}
  We consider a certain ultrahyperbolic equation in a Euclidean space being a generalization of Klein-Gordon-Fock equation.
  The behavior of solutions at points tending to infinity along timelike directions is studied.
  We examine the issue of existence of solutions possessing given asymptotic properties at infinity.

\smallskip

\noindent \textbf{Keywords:} 
ultrahyperbolic equation, Klein-Gordon-Fock equation, re\-la\-ti\-vis\-tic wave equa\-tion, asymptotic behavior at infinity, scattering problem.

\end{abstract}

\section{Introduction}
We consider the following ultrahyperbolic equation
\begin{equation}
  (\Delta_t - \Delta_x + m^2) u = f,
  \label{eqn}
\end{equation}
where a solution $u(x,t)$ and a function $f(x,t)$ are defined in
${\mathbb R}^d\times{\mathbb R}^n$, $d,n \geqslant 1$, 
\begin{equation*}
  \Delta_x = \partial_{x_1}^2 + \ldots + \partial_{x_d}^2, \quad \Delta_t = \partial_{t_1}^2 + \ldots + \partial_{t_n}^2,
\end{equation*}
$m$ is a positive constant.
In the particular case $n=1$, $f=0$, this is (hyperbolic) Klein-Gordon-Fock equation
describing free motion of a relativistic spinless particle with rest mass $m$.

In the present paper, we consider solutions 
$u(x,t)$ with the following asymptotic property 
\begin{equation}
  u(s \theta, s \omega)
  = s^{-(d+n-1)/2} \sum_\pm U_\pm(\theta,\omega) e^{\pm i s\, m\sqrt{1-\theta^2}}
  + O(s^{-(d+n+1)/2}), \quad s\to+\infty.
  \label{ampl}
\end{equation}
Here $(\theta,\omega)\in B^d\times S^{n-1}$, 
$B^d = \{\theta \in{\mathbb R}^d\,|\, |\theta|<1\}$, $S^{n-1} = \{\omega \in{\mathbb R}^n\,|\, |\omega|=1\}$, and $\theta^2 := |\theta|^2$.
Relation~(\ref{ampl}) characterizes the behavior of a solution at the infinity along timelike directions.
We will find a family of solutions, which exhibit such behavior. 
Besides, for given functions $U_\pm$ (more precisely, either $U_+$ or $U_-$ will be given) and for a given function $f$, 
we will construct a solution to equation~(\ref{eqn}) satisfying~(\ref{ampl}).

Asymptotic properties of solutions to the wave equation (the latter being the particular case of~(\ref{eqn}) for $n=1$, $m=0$)
and its vector analogues were studied in a number of papers (e.g., in~\cite{Lax-Phillips, Blag-scatter, Moses, Kis, BV, Plachenov}).
However, we point out the following considerable difference between the results obtained there and the asymptotic formula~(\ref{ampl}) being
valid in the case $m>0$.
Consider, for example, the case $d=3$, $n=1$.
The solution to the Cauchy problem for the wave equation with compactly supported initial data and $f$ vanishes when $(x,t)$ tends to infinity along
a timelike direction (Huygens principle).
On the other hand, in the case when $(x,t)$ tends to infinity along a {\em characteristic} direction,
the solution exhibits nontrivial asymptotic behavior, which can be described in terms of the Radon transform of the initial data
and a certain integral transform of the function $f$.
Besides, the solution is uniquely determined by the corresponding asymptotic coefficients~\cite{Blag-scatter}.
We will show that in the case of Klein-Gordon-Fock equation (when $m>0$) the situation is quite the reverse:
the solution to the Cauchy problem decay faster than any power of time and distance 
along characteristic directions
and possesses the asymptotic property~(\ref{ampl}) along timelike directions.
This will be established for a certain family of solutions in the general case $n\geqslant 1$
avoiding analysis of the Cauchy problem, which becomes ill-posed.

The set of known well-posed problems for ultrahyperbolic equations 
is less than
that for elliptic, parabolic, and hyperbolic equations.
Paper~\cite{Blag-char} concerns the characteristic problem for equation of the form~(\ref{eqn}) with $m=0$, $f=0$,
in which the solution $u(x,t)$ is determined in the region $|x|<|t|$ (or $|x|>|t|$) from its values on the characteristic cone $|x|=|t|$.
There existence and uniqueness of the solution in a certain class of functions were proved for arbitrary $d,n\geqslant 1$, 
and an integral formula for $u$ was obtained.
In the present paper, we consider the problem for equation~(\ref{eqn}) with $m>0$, in which
one of the coefficients $U_\pm$ from relation~(\ref{ampl}) is given as the data.
We establish the existence of the solution to this problem. 

The author is thankful to A.~P.~Kiselev for 
pointing to related results concerning the wave equation, 
and to M.~I.~Belishev and A.~F.~Vakulenko for helpful discussions.

\section{Construction of solutions possessing property~(\ref{ampl})}
We use the following definition of Fourier transform 
\begin{gather*}
  \hat v(\xi) = \int_{{\mathbb R}^d} e^{-i\langle{}x,\xi\rangle} v(x)  dx
\end{gather*}
(here $\langle\cdot,\cdot\rangle$ is the standard inner product in a real Euclidean space;
further the angle brackets will also denote the pairing of a distribution with a test function).
For a function $v(x,t)$ defined in ``spacetime'' ${\mathbb R}^d\times{\mathbb R}^n$,
it is convenient to define Fourier transform as follows
\begin{gather*}
  \hat v(\xi,\tau) = \int_{{\mathbb R}^{d+n}} e^{i(-\langle{}x,\xi\rangle+\langle{}t, \tau\rangle)} v(x,t)  dx dt.
\end{gather*}
Then inverse Fourier transform has the following form
\begin{gather*}
    v(x,t) = (2\pi)^{-d-n} \int_{{\mathbb R}^{d+n}} e^{i(\langle{}x,\xi\rangle-\langle{}t, \tau\rangle)} \hat v(\xi,\tau) d\xi d\tau.
\end{gather*}

We will always assume that the function $f$ on the right-hand side of equation~(\ref{eqn}) belongs to Schwartz space $\mathcal{S}({\mathbb R}^{d+n})$.
By applying Fourier transform to equation~(\ref{eqn}) we get
\begin{equation*}
  (\xi^2 + m^2 - \tau^2) \hat u(\xi,\tau) = \hat f(\xi,\tau).
\end{equation*}
Then we may formally represent $\hat u$ as follows
\begin{gather}
  \hat u = \hat u^f + \hat u^a,  \label{hatu}  \\
  \hat u^f(\xi,\tau) = \frac{\hat f(\xi,\tau)}{\xi^2 + m^2 - \tau^2}, \quad
  \hat u^a(\xi,\tau) = \delta(\xi^2 + m^2 - \tau^2) a(\xi,\tau), \notag
\end{gather}
which is essentially a sum of a particular solution to the inhomogeneous equation ($u^f$)
and a general solution to the homogeneous equation ($u^a$ with an arbitrary density $a$).
Later we will give rigorous definitions of the functions $u^f$ and $u^a$.
In particular, the factor $(\xi^2 + m^2 - \tau^2)^{-1}$,
which is singular at the hypersurface
\begin{equation*}
  \Sigma_m = \{(\xi,\tau)\in{\mathbb R}^{d+n}\,|\, \xi^2+m^2-\tau^2=0\},
\end{equation*}
will be regularized by means of the Cauchy principal value. 

Now we formally apply inverse Fourier transform to~(\ref{hatu}):
\begin{gather}
  u = u^f + u^a, \label{uint}\\
  u^f(x,t) = (2\pi)^{-d-n} \, \int_{{\mathbb R}^{d+n}} e^{i(\langle{}x,\xi\rangle-\langle{}t, \tau\rangle)} \frac{\hat f(\xi,\tau)}{\xi^2 + m^2 - \tau^2} \, d\xi d\tau, \notag\\
  u^a(x,t) = (2\pi)^{-d-n} \int_{{\mathbb R}^{d+n}} \, e^{i(\langle{}x,\xi\rangle-\langle{}t,\tau\rangle)} \delta(\xi^2+m^2-\tau^2) a(\xi,\tau) \, d\xi d\tau. \notag
\end{gather}
By additional formal transformations, one can bring the expression for $u^f$ to the form
\begin{gather}
  u^f(x,t) = (2\pi)^{-d-n} \,\mathrm{v.p.}\int_{{\mathbb R}^d\times S^{n-1}\times (0,\infty)}
  \frac{e^{i(\langle{}x,\xi\rangle-\langle{}t, \sigma\rangle \rho\sqrt{\xi^2 + m^2})} F(\xi,\sigma,\rho)}{1-\rho} \, d\xi dS_\sigma d\rho,
  \label{uf}\\
  F(\xi,\sigma,\rho) = 
  (\xi^2 + m^2)^{n/2-1} \hat f(\xi,\rho\sigma \sqrt{\xi^2 + m^2}) \frac{\rho^{n-1}}{1 + \rho}, \notag
\end{gather}
where we have chosen a certain regularization of the singularity $(1-\rho)^{-1}$ arising from the singularity $(\xi^2 + m^2 - \tau^2)^{-1}$
in the original expression.
Namely, the integral in~(\ref{uf}) is understood in the sense of Cauchy principal value.
Here and further $dS$ is the measure on the unit sphere $S^{n-1}$ induced by the Euclidean metrics in ${\mathbb R}^n$;
for the sphere of dimension zero, we set
\begin{equation*}
  dS_\sigma = \delta(\sigma-1) + \delta(\sigma+1).
\end{equation*}
Expression~(\ref{uf}), which is taken for the definition of the function $u^f$, will be discussed in sec.~\ref{sec-vp} in more details.
In particular, we will show that $u^f$ is a smooth solution to equation~(\ref{eqn}).
It is possible to chose a regularization of the integral in~(\ref{uf}) that differs from Cauchy principal value.
A solution we would have thus obtained differs from the chosen one by a solution of the homogeneous equation of the form~(\ref{eqn}).

Next the expression for the term $u^a$ in~(\ref{uint}) can be formally transformed to
\begin{equation}
  u^a(x,t) = (2\pi)^{-d-n} \int_{{\mathbb R}^d\times S^{n-1}} e^{i \left(\langle{}x,\xi\rangle - \langle{}t, \sigma\rangle \sqrt{\xi^2+m^2}\right)} A(\xi,\sigma) \, d\xi dS_\sigma,
  \label{uaint}
\end{equation}
where the function $A(\xi,\sigma)$ is related to $a(\xi,\tau)$ as follows
\begin{equation*}
  A(\xi,\sigma) = \frac{1}{2} (\xi^2+m^2)^{n/2-1} a\left(\xi,\sigma\sqrt{\xi^2+m^2}\right).
\end{equation*}
The function $A$ is defined in ${\mathbb R}^d\times S^{n-1}$.
This function, and so $u^a$, depends on the values of $a$ only on $\Sigma_m$.
Define $\mathcal{S}(\Sigma_m)$ as the set of functions $a$ on the hypersurface $\Sigma_m$ that correspond to functions $A$ from Schwartz space
$\mathcal{S}({\mathbb R}^d\times S^{n-1})$.
When this condition is fulfilled, we assume relation~(\ref{uaint}) as a definition of the function $u^a(x,t)$.
As can be easily verified, it is a smooth solution of homogeneous equation of the form~(\ref{eqn}).

The assumptions on the coefficients $U_\pm$ in~(\ref{ampl}) will involve the class of functions in $B^d\times S^{n-1}$
that have ``zero of infinite order'' on the boundary $\partial B^d$.
Namely, we say that $U$ belongs to $\mathcal{S}(B^d\times S^{n-1})$ if
it coincides 
on $B^d\times S^{n-1}$
with a function from $C^{\infty}({\mathbb R}^d\times S^{n-1})$ supported in $\overline{B^d}\times S^{n-1}$.

\begin{thm}\label{thm-asymp}
  Let $f\in\mathcal{S}({\mathbb R}^{d+n})$, $a\in\mathcal{S}(\Sigma_m)$.
  Then the function $u(x,t)$ given by~(\ref{uint}) is a smooth solution of equation~(\ref{eqn}), and asymptotic formula~(\ref{ampl}) holds true. 
  The coefficients $U_\pm$ are expressed in terms of $f$, $a$ as follows
\begin{equation}
  U_\pm(\theta,\omega) =
  \frac{e^{\pm i\pi (d-n+1)/4}}{4\pi m} \left(\frac{m}{2\pi \sqrt{1-\theta^2}}\right)^{(d+n-1)/2}\,
  (a \mp i\pi \hat f)\left(\frac{\mp m\theta}{\sqrt{1-\theta^2}}, \frac{\mp m\omega}{\sqrt{1-\theta^2}}\right)
  \label{Upm}
\end{equation}
(note that for any $(\theta,\omega)\in B^d\times S^{n-1}$, the arguments of the function $a \mp i\pi \hat f$ on the right-hand side
correspond to a point on $\Sigma_m$).
\end{thm}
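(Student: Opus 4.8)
The plan is to substitute $(x,t)=(s\theta,s\omega)$ into the integral representations~(\ref{uaint}), (\ref{uf}) and to extract the leading term as $s\to+\infty$ by the stationary phase method, treating $u^a$ and $u^f$ separately; that $u=u^f+u^a$ is a smooth solution of~(\ref{eqn}) has already been noted.

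For $u^a$ the substitution turns~(\ref{uaint}) into a single oscillatory integral over the $(d+n-1)$-dimensional manifold $\mathbb{R}^d\times S^{n-1}$, with amplitude $A\in\mathcal{S}(\mathbb{R}^d\times S^{n-1})$ and phase $\Phi(\xi,\sigma)=\langle\theta,\xi\rangle-\langle\omega,\sigma\rangle\sqrt{\xi^2+m^2}$. I would first locate the critical points: the tangential gradient in $\sigma$ vanishes exactly at $\sigma=\pm\omega$, and then $\nabla_\xi\Phi=0$ forces $\xi=\pm m\theta/\sqrt{1-\theta^2}$ with correlated signs and critical value $\mp m\sqrt{1-\theta^2}$ — precisely the points in~(\ref{Upm}). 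Since the first tangential $\sigma$-derivative of $\langle\omega,\sigma\rangle$ vanishes at $\sigma=\pm\omega$, the mixed derivatives $\partial_\xi\partial_\sigma\Phi$ vanish there, so the Hessian is block diagonal, with a definite $\xi$-block proportional to $\mathrm{Id}-\theta\otimes\theta$ (nondegenerate because $|\theta|<1$) and a definite $\sigma$-block proportional to $\mathrm{Id}_{n-1}$, the two of opposite sign (one positive, one negative definite); hence the critical points are nondegenerate and the total Hessian signature is $\pm(d-n+1)$. The stationary phase formula — with the Schwartz decay of $A$ handling the noncompact $\xi$-direction, uniformly in $(\theta,\omega)$ on compact subsets of $B^d\times S^{n-1}$ — then gives a leading term $s^{-(d+n-1)/2}\sum_\pm(\mathrm{const}_\pm)\,A\bigl(\pm m\theta/\sqrt{1-\theta^2},\pm\omega\bigr)e^{\pm ism\sqrt{1-\theta^2}}$ with error $O(s^{-(d+n+1)/2})$, the constants carrying powers of $m$ and $1-\theta^2$ from $|\det|^{-1/2}$ and the phase factors $e^{\pm i\pi(d-n+1)/4}$ from the signature; expressing $A$ through $a$ yields the ``$a$-part'' of~(\ref{Upm}).

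For $u^f$ I would first perform the $\sigma$- and $\rho$-integrals in~(\ref{uf}) for fixed $\xi$: after the change of variables $\tau=\rho\sqrt{\xi^2+m^2}\,\sigma$ they amount to the inner integral $J(\xi,s):=\mathrm{v.p.}\!\int_{\mathbb{R}^n}\frac{e^{-is\langle\omega,\tau\rangle}\hat f(\xi,\tau)}{\mu^2-|\tau|^2}\,d\tau$, $\mu:=\sqrt{\xi^2+m^2}$, whose principal-value singularity lies on the sphere $\{|\tau|=\mu\}$. The decisive point is that $\mu\ge m$, so this sphere stays a fixed distance from the origin; passing to polar coordinates, doing stationary phase on $S^{n-1}$ (critical directions $\pm\omega$) and then applying a one-dimensional principal-value asymptotic lemma in the radial variable (a parameter-dependent refinement of $\mathrm{v.p.}\!\int_{\mathbb{R}}e^{is\rho}\rho^{-1}\,d\rho=i\pi$, $s>0$, proved separately; the endpoint at the origin drops out because the singular radius is bounded away from $0$) gives, uniformly in $\xi$,
\begin{equation*}
  J(\xi,s)=\frac{i\pi}{2}\,\mu^{(n-3)/2}\Bigl(\frac{2\pi}{s}\Bigr)^{(n-1)/2}\Bigl(e^{i\pi(n-1)/4}\hat f(\xi,\mu\omega)e^{-is\mu}-e^{-i\pi(n-1)/4}\hat f(\xi,-\mu\omega)e^{is\mu}\Bigr)+\text{(remainder)},
\end{equation*}
with the remainder controlled in a form suitable for a further stationary phase, not merely in sup-norm. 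Plugging this into $u^f(s\theta,s\omega)=(2\pi)^{-d-n}\int_{\mathbb{R}^d}e^{is\langle\theta,\xi\rangle}J(\xi,s)\,d\xi$, each branch carries the phase $\langle\theta,\xi\rangle\mp\mu=\langle\theta,\xi\rangle\mp\sqrt{\xi^2+m^2}$, whose unique critical point $\xi=\pm m\theta/\sqrt{1-\theta^2}$ (critical value $\mp m\sqrt{1-\theta^2}$, $\mu=m/\sqrt{1-\theta^2}$) and Hessian coincide with those from the $u^a$ computation; the stationary phase in $\xi\in\mathbb{R}^d$ then produces the order-$s^{-(d+n-1)/2}$ term, the factor $i\pi/2$ supplying the ``$\mp i\pi\hat f$'' with the correct sign, and collecting the constants reproduces the ``$\hat f$-part'' of~(\ref{Upm}). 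Adding the two contributions gives~(\ref{ampl}) with coefficients~(\ref{Upm}).

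I expect the main obstacle to be the quantitative handling of the two iterated stationary phases in $u^f$: the expansion of $J(\xi,s)$ must be uniform in $\xi$ \emph{and} come with enough control of the $\xi$-dependence of its remainder that the subsequent $\xi$-integration can still be estimated by stationary phase — a bare sup-norm bound on the remainder of $J$ would only yield $O(s^{-(n+1)/2})$, which is too weak, so one must keep the oscillatory factors $e^{\mp is\mu}$ in the remainder and the decay coming from $\hat f\in\mathcal{S}$, and use that $\Sigma_m$ meets neither $\{\tau=0\}$ nor (for $|\theta|<1$) the stationary set; the transitions where the $\xi$-critical point escapes to infinity (as $\rho\to|\theta|$ in the direct form of~(\ref{uf})) must likewise be absorbed by the rapid decay of $\hat f$. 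One also needs the one-dimensional principal-value lemma in the stated parameter-dependent form. By comparison the $u^a$ part is a fairly standard single stationary phase, and the final bookkeeping of constants — Hessian determinants, Morse indices, powers of $2\pi$ — is routine though lengthy.
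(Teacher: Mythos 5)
Your treatment of $u^a$ is essentially the same as the paper's: substitute $(x,t)=(s\theta,s\omega)$, locate the critical points $(\pm m\theta/\sqrt{1-\theta^2},\pm\omega)$, note the block-diagonal Hessian with a definite $\xi$-block and a definite $\sigma$-block of opposite signs and signature $\pm(d-n+1)$, and read off the leading term; that all matches. For $u^f$ you take a genuinely different route. You propose to iterate: first compute, for fixed $\xi$, the full $\tau$-integral $J(\xi,s)$ with its principal-value singularity on the sphere $|\tau|=\mu(\xi)$ (angular stationary phase at $\pm\omega$ plus a radial one-dimensional v.p.\ asymptotic), and only then do stationary phase in $\xi$. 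The paper instead localizes in \emph{all} variables $(\xi,\sigma,\rho)$ simultaneously — cut-offs $\zeta(\rho)$ near $\rho=1$, $\chi(\sigma)$ near $\pm\omega$, $\eta(\xi)$ near the $\xi$-critical point — discards the complement by the non-stationary-phase lemmas (Lemmas~\ref{stat-triv} and~\ref{stat-decay}, using $|\partial_\tau\Phi|\geqslant m$ and $|\theta|<1$), and then applies a single multivariable Lemma~\ref{stat-asymp} to the remaining compactly supported piece: that lemma internally does the one-dimensional v.p.\ in $\rho$ (via~(\ref{vp-foubini}), a change of variable, and Lemma~\ref{lemma-z-asymp}) and then standard stationary phase in the \emph{combined} variables $(\xi,\gamma)$. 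The paper's order sidesteps exactly the obstacle you correctly flag: because the localization is done before any asymptotics, one never needs a uniform-in-$\xi$ expansion of an intermediate object like $J(\xi,s)$ whose remainder must still oscillate well enough to gain $s^{-d/2}$ under the outer $\xi$-integral. Your approach can be made rigorous, but the bookkeeping of the remainder (keeping the $e^{\mp is\mu(\xi)}$ factors and Schwartz decay explicit, and controlling the escape of the $\xi$-critical point as $\rho\to|\theta|$) is precisely where the extra work lies; the paper's single-lemma, localize-first strategy keeps the error analysis local and one-shot. Both yield the same leading term and the same formula~(\ref{Upm}).
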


Now we indicate necessary conditions on $U_\pm$, which follow from~(\ref{Upm}).
In the case $f=0$, we have
\begin{equation}
  U_\pm(-\theta,-\omega) = (\pm i)^{d-n+1} U_\mp(\theta,\omega),
  \label{apm}
\end{equation}
whereas in the case $a=0$, we have
\begin{equation}
  U_\pm(-\theta,-\omega) = (\pm i)^{d-n-1} U_\mp(\theta,\omega).    
  \label{apm-f}
\end{equation}
Thus the coefficients $U_\pm$ in asymptotic formula~(\ref{ampl}) can not be arbitrary functions. 
At the same time, the following theorem holds true.
\begin{thm}\label{thm-exist}
  Let $f\in\mathcal{S}({\mathbb R}^{d+n})$, $U_+\in\mathcal{S}(B^d\times S^{n-1})$.
  Then the function
\begin{equation*}
  a(\xi,\tau) = 4\pi m\, e^{-i\pi(d-n+1)/4} \left(\frac{2\pi}{|\tau|}\right)^{(d+n-1)/2} U_+\left(\frac{-\xi}{|\tau|}, \frac{-\tau}{|\tau|}\right)
  + i \pi \hat f(\xi, \tau)
\end{equation*}
of variables $(\xi,\tau)\in\Sigma_m$
belongs to $\mathcal{S}(\Sigma_m)$, and the corresponding solution $u(x,t)$ of the form~(\ref{uint}) exhibits the asymptotic behavior~(\ref{ampl})
with the given function $U_+$ and a function $U_-\in\mathcal{S}(B^d\times S^{n-1})$ determined by relation~(\ref{Upm}).
Moreover, this is the unique solution of the form~(\ref{uint}) with density $a$ from $\mathcal{S}(\Sigma_m)$,
for which the coefficient $U_+$ in~(\ref{ampl}) coincides with the given function.
\end{thm}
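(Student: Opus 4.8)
The plan is to obtain everything from Theorem~\ref{thm-asymp} by substituting the prescribed density $a$ into formula~(\ref{Upm}). First I would check that $a\in\mathcal{S}(\Sigma_m)$, i.e. that the associated function $A(\xi,\sigma)=\frac{1}{2}(\xi^2+m^2)^{n/2-1}a\!\left(\xi,\sigma\sqrt{\xi^2+m^2}\right)$ belongs to $\mathcal{S}({\mathbb R}^d\times S^{n-1})$. Using that $|\tau|=\sqrt{\xi^2+m^2}$ and $\tau/|\tau|=\sigma$ on $\Sigma_m$, one sees that $A$ is the sum of two terms. The $\hat f$-term is a smooth, polynomially bounded factor times $\hat f\!\left(\xi,\sigma\sqrt{\xi^2+m^2}\right)$, which is rapidly decreasing in $\xi$ (since $\hat f\in\mathcal{S}$ and $|(\xi,\sigma\sqrt{\xi^2+m^2})|\geqslant|\xi|$) and smooth in $\sigma$; hence it lies in $\mathcal{S}({\mathbb R}^d\times S^{n-1})$. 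The $U_+$-term is a smooth, polynomially bounded factor times $U_+\!\left(-\xi/\sqrt{\xi^2+m^2},\,-\sigma\right)$, whose first argument stays in $B^d$ and approaches $\partial B^d$ at rate $1-|\xi|/\sqrt{\xi^2+m^2}\sim m^2/(2\xi^2)$ as $|\xi|\to\infty$. Since $U_+$ and all its derivatives vanish to infinite order on $\partial B^d$, so that $|\partial^\alpha U_+(\eta,\omega)|\leqslant C_{\alpha,N}(1-|\eta|)^N$ near the boundary, the chain rule shows that this composition and all its $\xi$-derivatives decay faster than any power of $|\xi|$, which dominates the polynomial factor. Hence $A\in\mathcal{S}({\mathbb R}^d\times S^{n-1})$, i.e. $a\in\mathcal{S}(\Sigma_m)$.

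Next, Theorem~\ref{thm-asymp} applies to this $f$ and $a$: the function $u$ of the form~(\ref{uint}) is a smooth solution of~(\ref{eqn}) obeying~(\ref{ampl}) with coefficients $U_\pm$ given by~(\ref{Upm}), and it remains only to identify them. The map $(\theta,\omega)\mapsto\left(\mp m\theta/\sqrt{1-\theta^2},\,\mp m\omega/\sqrt{1-\theta^2}\right)$ takes $B^d\times S^{n-1}$ bijectively onto $\Sigma_m$; substituting the prescribed $a$ at such a point into~(\ref{Upm}) is a direct computation in which the constants, the phases $e^{\pm i\pi(d-n+1)/4}$ and the powers of $\sqrt{1-\theta^2}$ cancel exactly, and the coefficient $U_+$ delivered by Theorem~\ref{thm-asymp} turns out to equal the given function. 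For $U_-$ the same substitution produces a constant multiple of $U_+(-\theta,-\omega)$, which lies in $\mathcal{S}(B^d\times S^{n-1})$ since the antipodal map is a diffeomorphism preserving $B^d\times S^{n-1}$, plus a term equal, up to a constant, to $\big(\sqrt{1-\theta^2}\,\big)^{-(d+n-1)/2}\hat f\!\left(m\theta/\sqrt{1-\theta^2},\,m\omega/\sqrt{1-\theta^2}\right)$. As $|\theta|\to1$ the arguments of $\hat f$ escape to infinity, so by rapid decay of $\hat f$ this term together with all its derivatives vanishes to infinite order on $\partial B^d$, and extending it by zero gives an element of $\mathcal{S}(B^d\times S^{n-1})$. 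Hence $U_-\in\mathcal{S}(B^d\times S^{n-1})$.

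Finally, for uniqueness, suppose $u_1,u_2$ are of the form~(\ref{uint}) with the same $f$ and with densities $a_1,a_2\in\mathcal{S}(\Sigma_m)$, both satisfying~(\ref{ampl}) with the prescribed $U_+$. Then $w:=u_1-u_2$ is a solution of the form~(\ref{uaint}) with density $a_1-a_2\in\mathcal{S}(\Sigma_m)$, and subtracting the two expansions~(\ref{ampl}) shows that $w$ satisfies~(\ref{ampl}) with vanishing coefficient in front of $e^{+is\,m\sqrt{1-\theta^2}}$. On the other hand, Theorem~\ref{thm-asymp} applied to $w$ (with $f=0$) gives the expansion~(\ref{ampl}) for $w$ with coefficients computed from $a_1-a_2$ through~(\ref{Upm}). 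Since $\sqrt{1-\theta^2}>0$ for every $(\theta,\omega)\in B^d\times S^{n-1}$, the two exponentials $e^{\pm is\,m\sqrt{1-\theta^2}}$ are linearly independent at infinity and so the coefficients in~(\ref{ampl}) are uniquely determined by $w$; comparing the two descriptions of the ``$+$'' coefficient forces $(a_1-a_2)\!\left(-m\theta/\sqrt{1-\theta^2},\,-m\omega/\sqrt{1-\theta^2}\right)=0$ on $B^d\times S^{n-1}$, hence, by surjectivity of the map above, $a_1=a_2$ on $\Sigma_m$ and $u_1=u_2$.

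The main obstacle is the Schwartz-class bookkeeping in the first two paragraphs: one must pin down precisely how fast $-\xi/\sqrt{\xi^2+m^2}$ tends to $\partial B^d$ and $m\theta/\sqrt{1-\theta^2}$ tends to infinity, and check via Leibniz' and the chain rule that the infinite-order vanishing of $U_+$ (respectively the rapid decay of $\hat f$) outweighs every polynomial factor and every derivative of the substitution maps, so that the extension by zero across $\partial B^d$ is truly $C^\infty$. The algebraic cancellations and the ``independence of exponentials'' argument are routine.
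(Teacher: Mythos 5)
Your proposal is correct and follows essentially the same approach as the paper: invert formula~(\ref{Upm}) to read off the density $a$ from $U_+$ and $\hat f$, verify $a\in\mathcal{S}(\Sigma_m)$ by Schwartz-class bookkeeping, apply Theorem~\ref{thm-asymp}, and deduce uniqueness from the fact that $a$ is pointwise determined on $\Sigma_m$. The only difference is one of detail: the paper leaves implicit the observation that the coefficients $U_\pm$ in~(\ref{ampl}) are uniquely determined by the solution (linear independence of $e^{\pm ism\sqrt{1-\theta^2}}$), which you rightly make explicit.
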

In Theorem~\ref{thm-exist},
the functions $U_+$ and $U_-$ are interchangeable,
i.e., the solution $u$ can be constructed from a given function $U_-$ (and $f$).
In this case, the corresponding expression for $a$ takes the form
\begin{equation*}
  a(\xi,\tau) = 4\pi m\, e^{i\pi(d-n+1)/4} \left(\frac{2\pi}{|\tau|}\right)^{(d+n-1)/2} U_-\left(\frac{\xi}{|\tau|}, \frac{\tau}{|\tau|}\right)
  - i \pi \hat f(\xi, \tau).
\end{equation*}

We point out that there is a number of results concerning fundamental solutions to ultrahyperbolic equations.
We mention only the pioneering paper~\cite{deRham}, where equation~(\ref{eqn}) with $m=0$ was considered,
and paper~\cite{Ortner-Wagner} concerned with the case of arbitrary $m$.
One can use fundamental solutions to construct particular solutions to the inhomogeneous equation~(\ref{eqn}).
However, for the study of asymptotic properties of solutions, the Fourier analysis and the method of stationary phase
used in the present paper
seem to be the most appropriate tools.

\section{Auxiliary assertions}\label{sec-vp}
For a function $F\in\mathcal{S}({\mathbb R}^n)$, we give the following definition of the integral in the sense of Cauchy principal value 
\begin{equation}
  \mathrm{v.p.} \int_{{\mathbb R}^n} \frac{1}{x_n} F(x) dx = \lim_{\varepsilon\to 0+} \int_{|x_n|>\varepsilon} \frac{1}{x_n} F(x)\, dx.
  \label{vpint-general}
\end{equation}

We will need some estimates related to definition~(\ref{vpint-general}) involving
the following func\-ti\-onals on functions in ${\mathbb R}^n$ ($k,p$ are nonnegative integers):
\begin{equation}
  |F|_{k,p} = \sup_{|\alpha|\leqslant k,\, x\in{\mathbb R}^n} (1+|x|)^p |\partial^\alpha F(x)|.
  \label{Fkp}
\end{equation}

For $0<\varepsilon'\leqslant\varepsilon$, we have
\begin{equation*}
  \int_{|x_n|>\varepsilon} - \int_{|x_n|>\varepsilon'} \frac{1}{x_n} F(x) \,dx
  =\int_{{\mathbb R}^{n-1}} dx' \int_{\varepsilon'<|x_n|<\varepsilon} \frac{1}{x_n} F(x) \,dx_n,
\end{equation*}
where we use the notation $x'=(x_1,\ldots,x_{n-1})$.
Now show that
\begin{equation}
  \left|\int_{\varepsilon'<|x_n|<\varepsilon} \frac{1}{x_n} F(x) \,dx_n\right| \leqslant 2 (\varepsilon-\varepsilon') (1+|x'|)^{-n} |F|_{1,n}.
  \label{e'e}
\end{equation}
Indeed, the left-hand side equals
\begin{equation*}
  \left|\int_{\varepsilon'<|x_n|<\varepsilon} \frac{1}{x_n} (F(x)-F(x',0)) \,dx_n\right|
  \leqslant 2(\varepsilon-\varepsilon') \sup_{x_n} |\partial_{x_n} F(x',x_n)|,
\end{equation*}
which implies~(\ref{e'e}).
Now the absolute value of the integral with respect to $x'$ occurring in the preceding relation can be estimated by
$C_n (\varepsilon-\varepsilon') |F|_{1,n}$.
This implies the existence of the limit on the right-hand side of~(\ref{vpint-general}) and the estimate
\begin{equation}
  \left| \int_{|x_n|>\varepsilon} \frac{1}{x_n} F(x)\, dx - \mathrm{v.p.} \int_{{\mathbb R}^n} \frac{1}{x_n} F(x) dx\right| \leqslant C_n \varepsilon |F|_{1,n}.
  \label{vpint-converge}
\end{equation}
Next for $0<\varepsilon\leqslant 1$ we have
\begin{equation*}
  \left|\int_{|x_n|>\varepsilon} \frac{1}{x_n} F(x) \,dx_n\right| \leqslant
  \left|\int_{|x_n|>1} \frac{1}{x_n} F(x) \,dx_n\right|
  + \left|\int_{\varepsilon<|x_n|<1} \frac{1}{x_n} F(x) \,dx_n\right|.
\end{equation*}
The first term is estimated by
\begin{equation*}
  C \sup_{x_n} ((1+|x_n|) \, |F(x',x_n)|) \leqslant C_n (1+|x'|)^{-n} |F|_{0,n+1},
\end{equation*}
while the second one can be estimated using~(\ref{e'e}).
After integrating with respect to $x'$ we obtain
\begin{equation}
  \left|\int_{|x_n|>\varepsilon} \frac{1}{x_n} F(x) \,dx\right| \leqslant C_n |F|_{1,n+1}.
  \label{F11}
\end{equation}
Our calculations also imply the equality
\begin{equation}
  \mathrm{v.p.} \int_{{\mathbb R}^n} \frac{1}{x_n} F(x) \,dx = \int_{{\mathbb R}^{n-1}} dx' \,\mathrm{v.p.}\int_{\mathbb R} \frac{1}{x_n} F(x) \,dx_n.
  \label{vp-foubini}
\end{equation}

Suppose that the function $F$ in the integral~(\ref{vpint-general}) depends on the parameter $\lambda$.
For a fixed $\varepsilon>0$ we have
\begin{equation*}
  \partial_\lambda \int_{|x_n|>\varepsilon} \frac{1}{x_n} F(x,\lambda) \,dx = \int_{|x_n|>\varepsilon} \frac{1}{x_n} \partial_\lambda F(x,\lambda) \,dx,
\end{equation*}
as soon as functionals $|F(\cdot,\lambda)|_{0,n+1}$, $|\partial_\lambda F(\cdot,\lambda)|_{0,n+1}$ are uniformly bounded. 
This follows from the fact that the specified assumption implies that the integrals in this relation exist and
the improper integral on the right-hand side converges uniformly.
Assume also that $|\partial_\lambda F(\cdot,\lambda)|_{1,n}$ is uniformly bounded.
Then due to~(\ref{vpint-converge}) the integral on the right-hand side of the last equality,
and so the left-hand side, tends to its limit as $\varepsilon\to 0$ uniformly with respect to $\lambda$.
This implies that
\begin{multline*}
  \lim_{\varepsilon\to 0+} \int_{|x_n|>\varepsilon} \frac{1}{x_n} \partial_\lambda F(x,\lambda)\, dx
  = \lim_{\varepsilon\to 0+} \partial_\lambda \int_{|x_n|>\varepsilon} \frac{1}{x_n} F(x,\lambda)\, dx \\
  = \partial_\lambda \left(\lim_{\varepsilon\to 0+} \int_{|x_n|>\varepsilon} \frac{1}{x_n} F(x,\lambda)\, dx\right),
\end{multline*}
whence
\begin{equation*}
  \partial_\lambda \left(\mathrm{v.p.} \int_{{\mathbb R}^n} \frac{1}{x_n} F(x,\lambda)\, dx\right) = \mathrm{v.p.} \int_{{\mathbb R}^n} \frac{1}{x_n} \partial_\lambda F(x,\lambda)\, dx.
\end{equation*}
The analogous relation for higher order derivatives with respect to $\lambda$ is also valid: 
\begin{equation}
  \partial_\lambda^\alpha \left(\mathrm{v.p.} \int_{{\mathbb R}^n} \frac{1}{x_n} F(x,\lambda)\, dx\right) = \mathrm{v.p.} \int_{{\mathbb R}^n} \frac{1}{x_n} \partial_\lambda^\alpha F(x,\lambda)\, dx,
  \label{ddvp}
\end{equation}
providing that for all $\alpha$ we have $|\partial_\lambda^\alpha F(\cdot,\lambda)|_{1,n+1} \leqslant C_\alpha$.

Now show that the integral of the form~(\ref{vpint-general}) allows changing of variable. 
We will consider only the case when $n=1$, and the function $F$ belongs to $C_0^\infty(I)$, where
$I\subset{\mathbb R}$ is an open interval containing zero.
Suppose that $z(x)$ is a smooth function on $\overline I$ such that
\begin{equation}
  z(0) = 0, \quad z'|_{\overline I} \ne 0.
  \label{z-monotone}
\end{equation}
By $x(z)$ we denote the mapping inverse to $z(x)$.
We will establish the equality
\begin{equation}
  \mathrm{v.p.}\int_{\mathbb R} \frac{1}{x} F(x)\, dx = \mathrm{v.p.}\int_{\mathbb R} \frac{1}{z} E(z)\, dz, 
  \quad E(z) = \frac{z}{x(z)} |x'(z)|\, F(x(z)).
  \label{vp-change-var}
\end{equation}
The function $E(z)$ is defined by the specified expression on the interval $z(I)$ and is continued by zero on its complement.
Note that due to conditions~(\ref{z-monotone}) the function $E(z)$ is regular at zero, and so it belongs to $C_0^\infty({\mathbb R})$. 

Suppose that $z' > 0$ (the opposite case $z'<0$ is treated analogously).
For small enough $\varepsilon>0$ we have
\begin{equation}
  \int_{|x|>\varepsilon} \frac{1}{x} F(x)\, dx
  = \int_{z(\varepsilon)}^\infty + \int_{-\infty}^{z(-\varepsilon)} \frac{1}{z} E(z) \, dz.
  \label{change-var}
\end{equation}
Set $h := z'(0)$. Then
\begin{equation*}
  \left|\int_{z(\varepsilon)}^\infty - \int_{h \varepsilon}^\infty \frac{1}{z} E(z)\, dz\right| 
  \leqslant
  \frac{|z(\varepsilon) - h\varepsilon| \,\sup |E|}{\min(z(\varepsilon), h\varepsilon)}
  \leqslant C \varepsilon \sup |E|.
\end{equation*}
The integral over the set $z<z(-\varepsilon)$ is analogously approximated by the integral over the set $z<-h\varepsilon$.
Then in view of~(\ref{change-var}) we obtain that
\begin{equation*}
  \left|\int_{|x|>\varepsilon} \frac{1}{x} F(x)\, dx
  -\int_{|z|>h\varepsilon} \frac{1}{z} E(z) \, dz \right| \leqslant C \varepsilon \sup |E|.
\end{equation*}
Sending $\varepsilon$ to zero in this inequality gives~(\ref{vp-change-var}).

Now turn to expression~(\ref{uf}) for the function $u^f(x,t)$.
The form of this expression differs from~(\ref{vpint-general}), since the set of integration is not a Euclidean space.
However, the definition of the integral in the sense of Cauchy principal value is adapted to this case in an obvious way.
The function $F$ in~(\ref{vpint-general}) should be substituted by the function
\begin{equation}
  e^{i(\langle{}x,\xi\rangle-\langle{}t, \sigma\rangle \rho\sqrt{\xi^2 + m^2})} F(\xi,\sigma,\rho)
  \label{func}
\end{equation}
defined on the set $(\xi,\sigma,\rho) \in {\mathbb R}^d\times S^{n-1}\times (0,\infty)$ and depending on the parameters $x$, $t$.
We say that a function defined on the specified set belongs to Schwartz space if
the corresponding values of all of the functionals $|\cdot|_{k,p}$ are finite, the latter being defined (analogously to~(\ref{Fkp})) as follows.
Chose an arbitrary finite atlas on the sphere $S^{n-1}$.
Denote by $\gamma$ a chart from this atlas, i.e., a diffeomorphism of an open subset of the sphere (which will be called the domain of definition
of a chart) onto an open subset of ${\mathbb R}^{n-1}$
($\gamma$ will also denote points from this subset of ${\mathbb R}^{n-1}$, i.e., coordinates).
To each chart $\gamma$, we associate a smooth function $\chi_\gamma(\sigma)$ on the sphere supported in the domain of definition of $\gamma$,
such that
\begin{equation*}
  \sum_\gamma \chi_\gamma \equiv 1.
\end{equation*}
Set
\begin{equation*}
  |F|_{k,p} = 
  \sup (1+|\xi| + \rho)^p \, |\partial^\alpha_\xi \partial^{\alpha'}_\gamma \partial^{\alpha''}_\rho (\chi_\gamma(\sigma(\gamma)) F(\xi,\sigma(\gamma)\,\rho))|,
\end{equation*}
where the supremum is taken over all charts $\gamma$ of the atlas,
points $\xi, \gamma, \rho$, and multi-indices $\alpha, \alpha', \alpha''$,
such that $|\alpha|+|\alpha'|+\alpha''\leqslant k$.

Note that the function~(\ref{func}) belongs to Schwartz space, as soon as so does $\hat f(\xi,\tau)$ (see~(\ref{uf})).
Moreover, the functionals $|\cdot|_{k,p}$ of this function and its derivatives with respect to $(x,t)$ are locally
uniformly bounded, 
which allows for applying formula~(\ref{ddvp}).
We have
\begin{multline*}
  (2\pi)^{d+n} (\Delta_t - \Delta_x + m^2) u^f(x,t)= \\
  = \mathrm{v.p.}\int_{{\mathbb R}^d\times S^{n-1}\times (0,\infty)}
  \frac{(\Delta_t - \Delta_x + m^2)\left(e^{i(\langle{}x,\xi\rangle-\langle{}t, \sigma\rangle \rho\sqrt{\xi^2 + m^2})}\right) F(\xi,\sigma,\rho)}{1-\rho} \, d\xi dS_\sigma d\rho\\
  = \mathrm{v.p.}\int_{{\mathbb R}^d\times S^{n-1}\times (0,\infty)}
  \frac{(1-\rho^2)(\xi^2+m^2) e^{i(\langle{}x,\xi\rangle-\langle{}t, \sigma\rangle \rho\sqrt{\xi^2 + m^2})} F(\xi,\sigma,\rho)}{1-\rho} \, d\xi dS_\sigma d\rho\\
  = \int_{{\mathbb R}^d\times S^{n-1}\times (0,\infty)}
  (\xi^2+m^2)^{n/2} e^{i(\langle{}x,\xi\rangle-\langle{}t, \sigma\rangle \rho\sqrt{\xi^2 + m^2})} \hat f(\xi,\sigma\rho\sqrt{\xi^2+m^2}) \rho^{n-1} \, d\xi dS_\sigma d\rho\\  
  = \int_{{\mathbb R}^d\times{\mathbb R}^n}
  (\xi^2+m^2)^{n/2} e^{i(\langle{}x,\xi\rangle-\langle{}t, \tau\rangle\sqrt{\xi^2 + m^2})} \hat f(\xi,\tau\sqrt{\xi^2+m^2}) \, d\xi d\tau\\  
  = \int_{{\mathbb R}^d\times{\mathbb R}^n}
  e^{i(\langle{}x,\xi\rangle-\langle{}t, \tau\rangle)} \hat f(\xi,\tau) \, d\xi d\tau = (2\pi)^{d+n} f(x,t).
\end{multline*}
Thus the function $u^f$ is a solution to equation~(\ref{eqn}).

\section{The method of stationary phase}
In this section, we consider the behavior of the integral
\begin{equation}
  \mathrm{v.p.} \int_{{\mathbb R}^n} \frac{1}{x_n} F(x) e^{i s \Phi(x)}\, dx
  \label{vpint}
\end{equation}
as $s\to +\infty$, assuming that 
\begin{equation}
  F\in \mathcal{S}({\mathbb R}^n), \quad \Phi\in C^\infty({\mathbb R}^n; {\mathbb R}), \quad |\partial^\alpha\Phi(x)| \leqslant C_\alpha (1+|x|)^{M_{|\alpha|}} \quad \forall\alpha,
  \label{FPhi}
\end{equation}
for some nonnegative integers $M_k$, where $k\geqslant 0$.
First we give the following elementary lemma.
\begin{lemma}\label{stat-triv}
  Assume conditions~(\ref{FPhi}) are satisfied.
  Suppose also that for all $x\in{\rm supp} F$ we have
\begin{equation}
  |\partial\Phi(x)| \geqslant C(1+|x|)^{-M}.
  \label{ddPhi}
\end{equation}
Then for any $N$ we have
\begin{equation*}
  \int_{{\mathbb R}^n} F(x) e^{i s \Phi(x)} dx = O(s^{-N}), \quad s\to \pm\infty.
\end{equation*}
\end{lemma}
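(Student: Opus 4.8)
The plan is the classical non-stationary phase argument: integrate by parts repeatedly with a first order operator that reproduces the oscillatory factor. Introduce
\begin{equation*}
  L = \frac{1}{is\,|\partial\Phi|^2}\sum_{j=1}^n (\partial_{x_j}\Phi)\,\partial_{x_j},
\end{equation*}
which by~(\ref{ddPhi}) is well defined and smooth on the open set $\{\partial\Phi\ne 0\}\supseteq\mathrm{supp}\,F$ and satisfies $L(e^{is\Phi})=e^{is\Phi}$. Its formal transpose, acting on functions supported in $\mathrm{supp}\,F$, is $L^{\top}g = -(is)^{-1}\sum_{j=1}^n \partial_{x_j}\big((\partial_{x_j}\Phi)\,|\partial\Phi|^{-2}\,g\big)$.

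First I would justify the integration by parts. Since $F\in\mathcal S({\mathbb R}^n)$ is smooth and vanishes identically off $\mathrm{supp}\,F$, all of its derivatives vanish on the complement of $\mathrm{supp}\,F$ and, by continuity, on its boundary as well; hence $L^{\top}F$ --- given by the formula above on the interior of $\mathrm{supp}\,F$, where $\partial\Phi\ne0$, and set to zero elsewhere --- is again a smooth, rapidly decreasing function, and likewise $(L^{\top})^kF$ for every $k\geqslant 0$. Consequently, for each $N$,
\begin{equation*}
  \int_{{\mathbb R}^n} F(x)\,e^{is\Phi(x)}\,dx = \int_{{\mathbb R}^n} F(x)\,L^N\!\big(e^{is\Phi(x)}\big)\,dx = \int_{{\mathbb R}^n} \big((L^{\top})^N F\big)(x)\,e^{is\Phi(x)}\,dx,
\end{equation*}
the boundary terms vanishing and all integrals converging absolutely.

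The heart of the matter is the estimate $\big|(L^{\top})^N F(x)\big|\leqslant C_N\,|s|^{-N}\,(1+|x|)^{-n-1}$. Expanding $(L^{\top})^N F$ by the Leibniz rule gives a finite sum of terms $|s|^{-N}R(x)\,\partial^{\beta}F(x)$ with $|\beta|\leqslant N$, where $R$ is a rational expression in the derivatives $\partial^{\alpha}\Phi$, $|\alpha|\leqslant N$, with denominator a power of $|\partial\Phi|^2$; here one uses that differentiating $|\partial\Phi|^{-2}$, by the quotient rule, again yields expressions of this type (a short induction on the order of differentiation). On $\mathrm{supp}\,F$ one bounds $|\partial\Phi|^{-2}$ from above via~(\ref{ddPhi}) and the numerator via~(\ref{FPhi}), obtaining $|R(x)|\leqslant C_N(1+|x|)^{P_N}$ for some exponent $P_N$; the Schwartz decay of $\partial^{\beta}F$ then absorbs the factor $(1+|x|)^{P_N+n+1}$, giving the claimed pointwise bound. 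Integrating in $x$ yields $\big|\int_{{\mathbb R}^n} F\,e^{is\Phi}\,dx\big|\leqslant C'_N |s|^{-N}$. Since $N$ is arbitrary and $-\Phi$ satisfies the same hypotheses, so that the case $s\to-\infty$ reduces to $s\to+\infty$, the lemma follows.

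I expect the only genuine work to be the bookkeeping in the last paragraph: tracking how many powers of $|\partial\Phi|^{-2}$ accumulate after $N$ differentiations and confirming that the structure ``polynomial in $\partial^{\alpha}\Phi$ over a power of $|\partial\Phi|^2$'' is preserved, so that~(\ref{FPhi}) and the lower bound~(\ref{ddPhi}) together furnish polynomial-in-$|x|$ bounds that the rapid decay of $F$ can swallow. This is the only point at which all the hypotheses are genuinely used; the rest is the standard template.
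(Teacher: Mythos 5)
Your proof is correct and follows essentially the same route as the paper: iterated non-stationary-phase integration by parts with the operator $L = (is\,|\partial\Phi|^2)^{-1}\langle\partial\Phi,\partial\rangle$, using~(\ref{ddPhi}) and~(\ref{FPhi}) to check that each application of $L^{\top}$ produces another Schwartz function supported in $\mathrm{supp}\,F$. The paper states this last observation in one line and iterates, whereas you spell out the Leibniz-rule bookkeeping and the smoothness across $\partial(\mathrm{supp}\,F)$, but the substance is identical.
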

\begin{proof}
  By integrating by parts, we get that
\begin{multline*}
  \int_{{\mathbb R}^n} F(x) e^{i s \Phi(x)} dx 
  = -i s^{-1} \int_{{\mathbb R}^n} \frac{F(x)}{|\partial\Phi(x)|^{2}} \langle\partial\Phi(x),\partial\rangle e^{i s \Phi(x)}\, dx \\
  = i s^{-1} \int_{{\mathbb R}^n} (\langle\partial\Phi(x),\partial\rangle + \Delta\Phi(x))\left(\frac{F(x)}{|\partial\Phi(x)|^{2}}\right) e^{i s \Phi(x)}\, dx.
\end{multline*}
In view of the assumptions on the functions $F$ and $\Phi$, the exponential under the integral sign is multiplied
by a function from Schwartz space in ${\mathbb R}^n$ supported on a set contained in ${\rm supp} F$.
The latter means that estimate~(\ref{ddPhi}) holds true on the support of this function,
and so we can apply our argument once more.
After sufficient number of iterations, we can obtain the factor $s^{-N}$ for any given $N$.
\end{proof}

Further, as in sec.~\ref{sec-vp}, we will use the notation $x' = (x_1,\ldots,x_{n-1})$.
\begin{lemma}\label{stat-decay}
  Assume conditions~(\ref{FPhi}) are satisfied.
  Suppose also that for all $x\in{\rm supp} F$ we have
\begin{equation}
  |\partial_{x'}\Phi(x)| \geqslant C(1+|x|)^{-M}.
  \label{ddx'}
\end{equation}
Then the integral~(\ref{vpint}) equals $O(s^{-N})$ as $s\to \pm\infty$ for any $N$.
\end{lemma}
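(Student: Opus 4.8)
The plan is to reduce the statement to the case $n=1$ by Fubini for principal-value integrals, and then apply the non-stationary phase idea of Lemma~\ref{stat-triv} to the inner integral while carefully keeping track of the $1/x_n$ singularity. First I would use formula~(\ref{vp-foubini}) to write the integral~(\ref{vpint}) as
\begin{equation*}
  \int_{{\mathbb R}^{n-1}} dx' \,\mathrm{v.p.}\int_{\mathbb R} \frac{1}{x_n} F(x',x_n) e^{i s \Phi(x',x_n)}\, dx_n.
\end{equation*}
For each fixed $x'$, the phase $x_n\mapsto\Phi(x',x_n)$ is a one-variable phase, and the hypothesis~(\ref{ddx'}) says nothing directly about $\partial_{x_n}\Phi$; instead it controls the $x'$-gradient. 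So the correct move is to integrate by parts in the $x'$ variables, not in $x_n$. Concretely, I would write $e^{is\Phi}=(is)^{-1}|\partial_{x'}\Phi|^{-2}\langle\partial_{x'}\Phi,\partial_{x'}\rangle e^{is\Phi}$ on $\mathrm{supp}\,F$ and integrate by parts in $x'$, exactly as in the proof of Lemma~\ref{stat-triv} but with $\partial$ replaced by $\partial_{x'}$. The key point is that the $1/x_n$ factor and the principal value are untouched by this operation: differentiation in $x'$ commutes with $\mathrm{v.p.}\int dx_n$ (this is the one-parameter version of~(\ref{ddvp}), with $\lambda=x'$), and the estimates~(\ref{vpint-converge}), (\ref{F11}) guarantee that all the principal-value integrals that appear are absolutely controlled by the Schwartz functionals $|\cdot|_{1,n+1}$ of the relevant functions.

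After one integration by parts in $x'$ we obtain $s^{-1}$ times a principal-value integral of the same shape~(\ref{vpint}), with $F$ replaced by a new function of the form $(\langle\partial_{x'}\Phi,\partial_{x'}\rangle+\Delta_{x'}\Phi)\big(F/|\partial_{x'}\Phi|^2\big)$. I would check that this new amplitude again lies in $\mathcal{S}({\mathbb R}^n)$ and is supported inside $\mathrm{supp}\,F$: the denominator $|\partial_{x'}\Phi|^2$ is bounded below by $C(1+|x|)^{-2M}$ there by~(\ref{ddx'}), and the polynomial bounds~(\ref{FPhi}) on the derivatives of $\Phi$ ensure the quotient and its derivatives remain Schwartz after multiplying by the polynomial weights. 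Hence the hypotheses of the lemma are satisfied by the new integrand with possibly enlarged constants $M$, $M_k$, and I can iterate. Each iteration produces a further factor $s^{-1}$, and after $N$ steps the integral is $O(s^{-N})$. Since $N$ is arbitrary, the claim follows; the argument is uniform in the sign of $s$, giving the result as $s\to\pm\infty$.

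The main obstacle I anticipate is purely bookkeeping rather than conceptual: one must verify that differentiating in $x'$ genuinely passes through the $\mathrm{v.p.}\int dx_n$ and through the $x'$-integration, and that the resulting amplitudes stay in the Schwartz class adapted to the singular measure $\tfrac{1}{x_n}dx$. This is where~(\ref{ddvp}) and the uniform bounds on $|\partial^\alpha_{x'}(\cdots)|_{1,n+1}$ are needed, and it requires confirming that the factor $\tfrac{1}{x_n}$ does not interfere — which it does not, because all the manipulations act only on the smooth $x'$-dependence. Once this is set up, the estimate~(\ref{F11}) applied to the final amplitude closes the argument. I would also remark that~(\ref{ddx'}) is exactly the hypothesis that makes the $x'$-integration-by-parts legitimate on all of $\mathrm{supp}\,F$, so no stationary points in $x'$ ever arise and no genuine stationary-phase asymptotics (as opposed to rapid decay) are needed here.
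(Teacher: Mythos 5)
Your proof is correct and follows essentially the same route as the paper's: integrate by parts only in the $x'$-variables (never touching the $1/x_n$ singularity), iterate to extract arbitrary powers of $s^{-1}$, and close with~(\ref{F11}). The one procedural difference is that the paper carries out the entire manipulation on the $\varepsilon$-truncated integral $\int_{|x_n|>\varepsilon}$, where integration by parts in $x'$ is an ordinary absolutely-convergent-integral operation, and only passes to $\varepsilon\to 0$ at the very end via~(\ref{F11}); you instead work with the limiting principal value from the outset, which forces you to invoke~(\ref{vp-foubini}) and~(\ref{ddvp}) to justify commuting $\partial_{x'}$ with the inner $\mathrm{v.p.}\!\int dx_n$. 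Both routes are legitimate, but the paper's order of operations sidesteps~(\ref{ddvp}) entirely and is slightly cleaner. One small point you gloss over: the final application of~(\ref{F11}) is to the amplitude $F_N\,e^{is\Phi}$, and $|F_N\,e^{is\Phi}|_{1,n+1}$ grows like $|s|$ because one derivative may fall on the exponential; the paper records the bound $|F_N e^{is\Phi}|_{1,n+1}\leqslant |s|\,|F_N|_{1,n+1+M_0+M_1}$ explicitly. Since $N$ is arbitrary this costs nothing, but it should be stated to make the $O(s^{-N})$ bookkeeping precise.
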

\begin{proof}
  We will estimate the integral
\begin{equation}
  \int_{|x_n|>\varepsilon} \frac{1}{x_n} F(x) e^{i s \Phi(x)}\, dx
  \label{int-eps}
\end{equation}
for an arbitrary $\varepsilon>0$.
This integral equals
\begin{multline*}
  -i s^{-1} \int_{|x_n|>\varepsilon} \frac{F(x)}{x_n |\partial_{x'}\Phi(x)|^{2}} \langle\partial_{x'}\Phi(x),\partial_{x'}\rangle e^{i s \Phi(x)}\, dx \\
  = i s^{-1} \int_{|x_n|>\varepsilon} \frac{1}{x_n} (\langle\partial_{x'}\Phi(x),\partial_{x'}\rangle + \Delta_{x'}\Phi(x))\left(\frac{F(x)}{|\partial_{x'}\Phi(x)|^{2}}\right) e^{i s \Phi(x)}\, dx.
\end{multline*}
In view of the assumptions on the functions $F$ and $\Phi$, we obtain the integral of the form~(\ref{int-eps}),
in which the function $F$ is substituted by a certain function $F_1$ from Schwartz space in ${\mathbb R}^n$ supported
on a set contained in ${\rm supp} F$.
The latter means that estimate~(\ref{ddx'}) for the functions $F_1$ and $\Phi$ holds true, and so
we can iterate our argument.
After a sufficient number of iterations, we get that the integral~(\ref{int-eps}) equals
\begin{equation*}
  s^{-N} \int_{|x_n|>\varepsilon} \frac{1}{x_n} F_N(x) e^{i s \Phi(x)}\, dx,
\end{equation*}
where $F_N\in \mathcal{S}({\mathbb R}^n)$.
Then due to~(\ref{F11}), we have
\begin{multline*}
  \left|\lim_{\varepsilon\to 0} \int_{|x_n|>\varepsilon} \frac{1}{x_n} F(x) e^{i s \Phi(x)}\, dx\right|
  = |s|^{-N} \left|\lim_{\varepsilon\to 0} \int_{|x_n|>\varepsilon} \frac{1}{x_n} F_N(x) e^{i s \Phi(x)}\, dx\right| \\  
  \leqslant C |s|^{-N} |F_N e^{i s \Phi}|_{1,n+1}.
\end{multline*}
It remains to observe that
\begin{equation*}
  |F_N e^{i s \Phi}|_{1,n+1} \leqslant |s|\, |F_N|_{1,n+1+M_0+M_1},
\end{equation*}
where $M_0$, $M_1$ are the exponents in the last inequality in hypothesis~(\ref{FPhi}).
\end{proof}

\begin{lemma}\label{lemma-z-asymp}
  Let $F\in\mathcal{S}({\mathbb R})$. Then for any $N\geqslant 0$ and $s>0$ we have
  \begin{equation}
    \left|\mathrm{v.p.} \int_{\mathbb R} \frac{1}{z} F(z) e^{i s z}\, dz - i\pi F(0)\right| \leqslant C_N |F|_{N+1,2}\, s^{-N}.
    \label{z-asymp}
  \end{equation}
\end{lemma}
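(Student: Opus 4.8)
The plan is to isolate the non-decaying term $i\pi F(0)$ from the elementary identity $\mathrm{v.p.}\int_{\mathbb R}z^{-1}e^{isz}\,dz=i\pi$, valid for $s>0$ (the real part vanishes by oddness, the imaginary part equals $2\int_0^\infty z^{-1}\sin(sz)\,dz=\pi$), and then to estimate the rest. I would peel off a fixed multiple of a smooth cut-off of $F$ at the origin: fix $\chi\in C_0^\infty(\mathbb R)$, even, with $\chi\equiv1$ on $[-1,1]$, $\mathrm{supp}\,\chi\subset[-2,2]$, decreasing on $[1,2]$, and write $F=F(0)\chi+R$, so that $R\in\mathcal S(\mathbb R)$ with $R(0)=0$. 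The gain is that $R(z)/z$ is again a Schwartz function: it extends smoothly across the origin, since $R(z)/z=\int_0^1 R'(tz)\,dt$, and decays rapidly at infinity with all its derivatives because $R$ does. Correspondingly
\[
  \mathrm{v.p.}\int_{\mathbb R}\frac{1}{z}F(z)e^{isz}\,dz
  =i\pi F(0)-F(0)\int_{\mathbb R}\frac{1-\chi(z)}{z}e^{isz}\,dz+\int_{\mathbb R}\frac{R(z)}{z}e^{isz}\,dz,
\]
the last two integrands having no singularity at $z=0$, and it remains to bound the two corrections by $C_N|F|_{N+1,2}\,s^{-N}$.

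For the remainder term I would integrate by parts $N$ times; the boundary terms vanish by the rapid decay of $R/z$ and its derivatives, giving
\[
  \Bigl|\int_{\mathbb R}\frac{R(z)}{z}e^{isz}\,dz\Bigr|\leq s^{-N}\,\Bigl\|\Bigl(\tfrac{R}{z}\Bigr)^{(N)}\Bigr\|_{L^1(\mathbb R)}.
\]
The core estimate is $\bigl\|(R/z)^{(N)}\bigr\|_{L^1}\leq C_N|F|_{N+1,2}$. On $\{|z|\geq1\}$ one expands by the Leibniz rule, so that $(R/z)^{(N)}$ is a combination, with $\chi$-independent coefficients, of the functions $R^{(k)}(z)z^{-(N-k+1)}$, $0\leq k\leq N$; using $|R^{(k)}(z)|\leq C_N(1+|z|)^{-2}|F|_{N+1,2}$ (which follows from $|F^{(k)}(z)|\leq(1+|z|)^{-2}|F|_{k,2}$ together with the fact that $\chi$ is fixed and compactly supported) and $|z|^{-(N-k+1)}\leq1$, one gets integrability with the stated constant. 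On $\{|z|<1\}$ one uses instead the identity $(R/z)^{(N)}(z)=\int_0^1 t^N R^{(N+1)}(tz)\,dt$, which bounds the integrand by $C_N|F|_{N+1,2}$ on a set of finite measure.

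For the cut-off tail $\int_{\mathbb R}(1-\chi(z))z^{-1}e^{isz}\,dz$ the same non-stationary-phase reasoning applies: $(1-\chi(z))/z$ is smooth, vanishes on $[-1,1]$, and equals $1/z$ for $|z|\geq2$, so all its derivatives of order $\geq1$ are in $L^1$, and $N$ integrations by parts yield $O(s^{-N})$ with a constant depending only on $\chi$. For $s\leq1$, where $s^{-N}\geq1$, it suffices that this integral is bounded uniformly in $s>0$; this follows from $\int(1-\chi)z^{-1}e^{isz}\,dz=i\pi-\mathrm{v.p.}\int\chi z^{-1}e^{isz}\,dz$ and a one-line bound for $\mathrm{v.p.}\int\chi z^{-1}e^{isz}\,dz=2i\int_0^\infty z^{-1}\sin(sz)\chi(z)\,dz$ via the second mean value theorem for integrals (here the choice of $\chi$ even and decreasing on $[1,2]$ is used). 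Multiplying by $|F(0)|\leq|F|_{0,2}\leq|F|_{N+1,2}$ gives the required bound for all $s>0$, and combining the two corrections finishes the proof.

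I do not anticipate a real obstacle here; the work is in bookkeeping and in the small-$s$ range. One must ensure that the whole argument uses only the single seminorm $|F|_{N+1,2}$ — that the cut-off injects only $\chi$-dependent constants, that the weight $(1+|z|)^2$ and no larger power already makes the relevant derivatives integrable, and that near the origin the apparent pole of $R(z)/z$ genuinely cancels so that exactly $N+1$ derivatives of $F$ enter — and that all estimates hold down to small $s$, where one replaces decay by a crude uniform bound.
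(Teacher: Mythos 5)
Your proof is correct, but it follows a genuinely different route from the paper's. The paper treats the integral as the pairing $\langle\mathcal{P}_{1/z},\,F(z)e^{isz}\rangle$, applies the Parseval identity together with $\widehat{\mathcal{P}_{1/z}}(\xi)=-i\pi\,\mathrm{sgn}\,\xi$, and in a single line reduces the left-hand side of~(\ref{z-asymp}) to the tail integral $-i\int_{-\infty}^{-s}\check F(\xi)\,d\xi$, whose rapid decay in $s$ is then read off from the Schwartz decay of $\check F$. You instead work entirely on the physical side: you peel off the singular contribution via $F=F(0)\chi+R$ with $R(0)=0$, invoke the elementary Dirichlet identity $\mathrm{v.p.}\int_{\mathbb R}z^{-1}e^{isz}\,dz=i\pi$ to produce the leading term, observe that $R(z)/z$ and $(1-\chi(z))/z$ are smooth and that their high-order derivatives are integrable, and kill the remainders by $N$ integrations by parts, handling $s\leqslant1$ by a uniform bound (second mean value theorem). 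This is a self-contained non-stationary-phase argument of exactly the flavor used in Lemmas~\ref{stat-triv} and~\ref{stat-decay} of the paper, whereas the paper's own proof of this lemma switches gears and uses the distributional Fourier transform. The paper's route is shorter and avoids any cut-off bookkeeping; yours is more elementary, needs no Fourier analysis of $\mathcal{P}_{1/z}$, and makes the dependence on the seminorm $|F|_{N+1,2}$ very explicit (your tracking of the weight $(1+|z|)^2$ and of exactly $N+1$ derivatives, with only $\chi$-dependent constants injected, is correct). Both are sound proofs of the stated estimate.
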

\begin{proof}
  The integral on the left-hand side of~(\ref{z-asymp}) equals the pairing of the distribution $\mathcal{P}_{1/z}$
  with the test function $F(z) e^{i s z}$ of the variable $z$:
\begin{equation*}
  \mathrm{v.p.} \int_{\mathbb R} \frac{1}{z} F(z) e^{i s z}\, dz
  = \left\langle\mathcal{P}_{1/z}, F(z) e^{i s z}\right\rangle_z.
\end{equation*}
This expression equals the pairing of the distribution $(2\pi)^{-1}\widehat{\mathcal{P}_{1/z}}$
on the inverse Fourier transform of the specified test function.
The latter (as a function of the variable $\xi$) equals $\check F(\xi+s)$.
Therefore, by applying formula
\begin{equation*}
  \widehat{\mathcal{P}_{1/z}}(\xi) = -i\pi\,{\rm sgn}\xi,
\end{equation*}
we get
\begin{multline*}
  \left\langle\mathcal{P}_{1/z}, F(z) e^{i s z}\right\rangle_z
  = -i\pi (2\pi)^{-1} \int_{\mathbb R} \check F(-\xi-s)\, {\rm sgn}\xi\, d\xi \\
  = i/2 \int_{\mathbb R} \check F(\xi) d\xi - i \int_0^\infty \check F(-s-\xi)\, d\xi 
  = i\pi F(0) - i \int_{-\infty}^{-s} \check F(\xi)\, d\xi. 
\end{multline*}
The second term decays rapidly when $s$ grows.
This term can easily be estimated by the right-hand side of~(\ref{z-asymp}).
\end{proof}

\begin{lemma}\label{stat-asymp}
  Assume conditions~(\ref{FPhi}) are satisfied, and, besides, the function $F$ is com\-pact\-ly supported.
  Suppose also that $\Phi$ satisfies the following conditions:
\begin{gather}
  \partial\Phi(x) \ne 0, \quad x\in V, \label{ddxne0}\\
  \partial_{x'}\Phi(x',0) \ne 0, \quad  (x',0)\in V\setminus\{0\}, \label{ddx'ne0} \\  
  \partial_{x'}\Phi(0) = 0, \quad \det\partial_{x'}^2 \Phi(0) \ne 0, \label{ddx'0}
\end{gather}
where $V$ is a neighborhood of ${\rm supp} F\cup\{0\}$. 
Then we have (${\rm sgn}(\partial_{x'}^2 \Phi(0))$ is the difference of number of positive eigenvalues and negative eigenvalues
of the matrix $\partial_{x'}^2 \Phi(0)$)
\begin{multline}
  \mathrm{v.p.} \int_{{\mathbb R}^n} \frac{1}{x_n} F(x) e^{i s \Phi(x)}\, dx =\\
  =i\pi\, {\rm sgn}(\partial_{x_n}\Phi(0)) \left(\frac{2\pi}{s}\right)^{(n-1)/2} |\det \partial_{x'}^2 \Phi(0)|^{-1/2} e^{i\pi\, {\rm sgn}(\partial_{x'}^2 \Phi(0))/4} e^{i s \Phi(0)} F(0)\\
  + O(s^{(n+1)/2}), \quad s \to +\infty.
  \label{stat-phase2}
\end{multline}
\end{lemma}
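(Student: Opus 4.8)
The plan is to localize the integral near the stationary point $0$ by means of a partition of unity, to dispose of the remaining pieces using Lemmas~\ref{stat-triv} and~\ref{stat-decay}, and then to evaluate the localized integral by performing the classical $(n-1)$-dimensional stationary phase in the variables $x'$ first and the Cauchy principal value integration in $x_n$ afterwards.

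First I would choose a small ball $B_\delta$ about the origin and a partition $1=\chi_0+\chi_1+\chi_2$ on $\mathrm{supp}\,F$ with $\chi_0\in C_0^\infty(B_\delta)$, $\chi_0\equiv 1$ near $0$, with $\chi_1$ supported in $\{x_n\neq 0\}$, and with $\chi_2$ supported in a neighbourhood of $\{x_n=0\}\cap(\mathrm{supp}\,F\setminus B_{\delta/2})$ on which $|\partial_{x'}\Phi|\geqslant c>0$; such a neighbourhood exists by~(\ref{ddx'ne0}) and compactness, since the points of $\{x_n=0\}\cap\mathrm{supp}\,F$ other than $0$ lie in $V\setminus\{0\}$. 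For $F\chi_1$ the factor $1/x_n$ is a smooth amplitude and $|\partial\Phi|\geqslant c>0$ on $\mathrm{supp}\,F$ by~(\ref{ddxne0}) and compactness, so Lemma~\ref{stat-triv} gives a contribution $O(s^{-N})$; for $F\chi_2$, Lemma~\ref{stat-decay} gives $O(s^{-N})$. Shrinking $\delta$ if necessary, I may also assume, by the implicit function theorem applied to $\partial_{x'}\Phi=0$ at $(0,0)$ (using $\det\partial_{x'}^2\Phi(0)\neq 0$ from~(\ref{ddx'0})), that for $|x_n|\leqslant\delta$ the function $x'\mapsto\Phi(x',x_n)$ has a unique critical point $x'_\ast(x_n)\in B_\delta$, depending smoothly on $x_n$, with $x'_\ast(0)=0$, that $\partial_{x'}^2\Phi(x'_\ast(x_n),x_n)$ is nondegenerate with signature constantly equal to ${\rm sgn}(\partial_{x'}^2\Phi(0))$, and that $\psi(x_n):=\Phi(x'_\ast(x_n),x_n)$ is monotone on $[-\delta,\delta]$.

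For the remaining piece $G:=F\chi_0$ I would use~(\ref{vp-foubini}) to write $\mathrm{v.p.}\int\frac1{x_n}Ge^{is\Phi}\,dx$ as the principal value integral in $x_n$ of $\frac1{x_n}\int_{{\mathbb R}^{n-1}}G(x',x_n)e^{is\Phi(x',x_n)}\,dx'$, pulling the $x'$-independent factor $1/x_n$ out of the inner integral. The classical stationary phase lemma with remainder, applied with $x_n$ as a parameter, gives
\[
  \int_{{\mathbb R}^{n-1}}G(x',x_n)\,e^{is\Phi(x',x_n)}\,dx'
  =\left(\frac{2\pi}{s}\right)^{(n-1)/2}e^{is\psi(x_n)}\,b(x_n)+\rho_s(x_n),
\]
where $b(x_n)=|\det\partial_{x'}^2\Phi(x'_\ast(x_n),x_n)|^{-1/2}\,e^{i\pi\,{\rm sgn}(\partial_{x'}^2\Phi(0))/4}\,G(x'_\ast(x_n),x_n)$, the functions $b,\psi$ are smooth and compactly supported, and the remainder satisfies $|\rho_s|_{1,2}\leqslant C\,s^{-(n-1)/2-1}$ (its $x_n$-derivatives are controlled since one may differentiate the classical expansion under the integral). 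Consequently $\mathrm{v.p.}\int\frac1{x_n}\rho_s(x_n)\,dx_n=O(s^{-(n+1)/2})$ by~(\ref{F11}), which is absorbed into the error term, and it remains to compute $(2\pi/s)^{(n-1)/2}\,\mathrm{v.p.}\int_{\mathbb R}\frac1{x_n}b(x_n)e^{is\psi(x_n)}\,dx_n$.

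Since $\psi'(0)=\partial_{x_n}\Phi(0)\neq 0$ (from~(\ref{ddxne0}) together with $\partial_{x'}\Phi(0)=0$) and $\psi$ is monotone on $\mathrm{supp}\,b$, I would set $z=\psi(x_n)-\psi(0)$ and apply the change-of-variable formula~(\ref{vp-change-var}), reducing the last integral to $e^{is\psi(0)}\,\mathrm{v.p.}\int_{\mathbb R}\frac1z\widetilde E(z)e^{isz}\,dz$ with $\widetilde E(z)=\frac{z}{x_n(z)}\,|x_n'(z)|\,b(x_n(z))$, where $x_n(z)$ denotes the inverse function; note that $\widetilde E$ does not depend on $s$. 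Lemma~\ref{lemma-z-asymp} then gives $i\pi\widetilde E(0)+O(s^{-N})$, and a short limit computation ($x_n(z)=x_n'(0)z+o(z)$ with $x_n'(0)=1/\psi'(0)$) shows that $\frac{z}{x_n(z)}|x_n'(z)|\to{\rm sgn}(\psi'(0))={\rm sgn}(\partial_{x_n}\Phi(0))$ as $z\to 0$, so $\widetilde E(0)={\rm sgn}(\partial_{x_n}\Phi(0))\,b(0)$. Using $\psi(0)=\Phi(0)$ and $G(0)=F(0)$ (as $\chi_0\equiv1$ near $0$), this reproduces exactly the main term of~(\ref{stat-phase2}), and collecting the error contributions ($O(s^{-N})$ from $F\chi_1$, $F\chi_2$ and from the $z$-asymptotics, and $O(s^{-(n+1)/2})$ from $\rho_s$) finishes the argument. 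The step I expect to be the main obstacle is the parametric stationary phase expansion of the previous paragraph: one needs the remainder bounded not merely in supremum norm but in the norm $|\cdot|_{1,2}$, so that its principal value integral in $x_n$ is genuinely of the right order, which forces one to keep track of derivatives in the transversal variable $x_n$; everything else is routine once the geometry of the critical set has been disentangled by the localization.
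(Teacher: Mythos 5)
Your proposal is correct in outline and reaches the stated formula, but it takes a genuinely different route from the paper's: you perform the two asymptotic steps in the opposite order. After localizing near the origin, the paper applies~(\ref{vp-foubini}) and first does the $1$-dimensional $\mathrm{v.p.}$ analysis in $x_n$ (change of variable~(\ref{vp-change-var}) followed by Lemma~\ref{lemma-z-asymp}) with $x'$ as a frozen parameter; the $O(s^{-N})$ error there is controlled by $C_N|E(\cdot\,;x')|_{N+1,2}s^{-N}$, which is trivially uniform in $x'$ since the amplitude is smooth and compactly supported. Only then does the paper integrate the resulting $x'$-dependent leading term and invoke the \emph{standard}, non-parametric stationary phase in $x'$. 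You instead do stationary phase in $x'$ first, with $x_n$ as a parameter, and only afterwards handle the $\mathrm{v.p.}$ in $x_n$. This forces you to control the stationary-phase remainder $\rho_s(x_n)$ not merely in sup norm but in $|\cdot|_{1,2}$, i.e.\ together with its $x_n$-derivative — exactly the point you flag as the main obstacle. That estimate is in fact true, but it relies on the observation that after centering the phase at the moving critical point $x'_\ast(x_n)$, the $x_n$-derivative of the reduced phase vanishes at $x'=x'_\ast(x_n)$, so the extra factor of $s$ produced by differentiating $e^{is\Phi}$ is absorbed by an additional order of vanishing of the amplitude; this is a more refined version of the stationary phase lemma than the one the paper needs. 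Your three-way partition $\chi_0+\chi_1+\chi_2$ and the paper's $\chi$, $\zeta(x_n)$ cut-offs accomplish the same localization. (Also, a small notational point: you cite~(\ref{vp-foubini}) to move the $\mathrm{v.p.}$ to the \emph{outer} $x_n$-integral, but~(\ref{vp-foubini}) as stated puts it on the inner one; the order you actually use follows more directly from ordinary Fubini at fixed $\varepsilon$ plus passage to the limit.) In short, both proofs work; the paper's ordering is technically cleaner because it only requires Lemma~\ref{lemma-z-asymp} with parameters, whose uniformity is manifest, whereas yours requires a parametric stationary phase with derivative bounds on the remainder.
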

\begin{proof}
  Condition~(\ref{ddxne0}) and the first condition in~(\ref{ddx'0}) imply that $\partial_{x_n} \Phi(0) \ne 0$.
  Let $\chi(x)$ be a smooth function, such that $\partial_{x_n} \Phi \ne 0$ on its support. 
  Assume also that $\chi(x)=1$ for small $x$.
  The left-hand side in~(\ref{stat-phase2}) can be represented as the following sum
\begin{equation}
  \mathrm{v.p.} \int_{{\mathbb R}^n} \frac{1}{x_n} (\chi F e^{i s \Phi})(x)\, dx
  + \mathrm{v.p.} \int_{{\mathbb R}^n} \frac{1}{x_n} ((1-\chi) F e^{i s \Phi})(x)\, dx.
  \label{vpint-sum}
\end{equation}
Now show that the second term equals $O(s^{-N})$ as $s\to+\infty$ for any $N$.
Decompose it as a sum of two terms:
\begin{equation*}
  \mathrm{v.p.} \int_{{\mathbb R}^n} \frac{1}{x_n} \zeta(x_n) ((1-\chi) F e^{i s \Phi})(x)\, dx
  + \int_{{\mathbb R}^n} \frac{1}{x_n} (1-\zeta(x_n)) ((1-\chi) F e^{i s \Phi})(x)\, dx,
\end{equation*}
where $\zeta$ is a function from $C_0^\infty({\mathbb R})$ that equals unity in the neighborhood of zero.
In view of condition~(\ref{ddxne0}) and the compactness of the support of $F$,
the second term can be treated with the use of Lemma~\ref{stat-triv}.
The function $F(x)$ in Lemma~\ref{stat-triv} should be substituted by the expression
\begin{equation*}
  (1-\zeta(x_n)) (1-\chi(x)) F(x)/x_n.
\end{equation*}
Due to condition~(\ref{ddx'ne0}), the first term in the preceding expression can be treated by Lemma~\ref{stat-decay},
provided that $\zeta$ is supported in a sufficiently small neighborhood of zero.

Now turn to the first term in~(\ref{vpint-sum}).
By~(\ref{vp-foubini}) it can be written in the form
\begin{equation}
  \int_{{\mathbb R}^{n-1}} dx' \,\mathrm{v.p.}\int_{\mathbb R} \frac{1}{x_n} (\chi F e^{i s \Phi})(x)\, dx_n.
  \label{x'xn}
\end{equation}
The condition $\partial_{x_n}\Phi\ne 0$, which is satisfied on the support of the integrand, 
allows for changing the variable $x_n \mapsto z=\Phi(x',x_n)-\Phi(x',0)$ (the inverse mapping will be denoted by $x_n(x',z))$
in the inner integral by formula~(\ref{vp-change-var}), which yields
\begin{equation*}
  e^{i s \Phi(x',0)} \,\mathrm{v.p.}\int_{\mathbb R} \frac{1}{z} \frac{z}{x_n(x',z)} |\partial_z x_n(x',z)|\, \chi(x) F(x) e^{i s z} \, dz.
\end{equation*}
Now apply Lemma~\ref{lemma-z-asymp} to the integral in this expression.
Since $z/x_n|_{z=0} = \partial_{x_n}\Phi(x',0)$, we have
\begin{equation*}
  \frac{z}{x_n(x',z)} |\partial_z x_n(x',z)|\, \chi(x) F(x)\big|_{z=0} = (\chi F\, {\rm sgn}\, \partial_{x_n}\Phi)(x',0) 
  = (\chi F)(x',0)\, {\rm sgn}\, \partial_{x_n}\Phi(0). 
\end{equation*}
We obtain that the integral~(\ref{x'xn}) equals 
\begin{equation*}
  i\pi \,{\rm sgn}\, \partial_{x_n}\Phi(0) \int_{{\mathbb R}^{n-1}} (\chi F e^{i s \Phi})(x',0)\, dx' + O(s^{-N}),
\end{equation*}
as $s\to+\infty$ for arbitrary $N$.
In view of~(\ref{ddx'0}), the integral in the last expression can be treated by the standard method of stationary phase
(providing that the support of $\chi(x)$ is contained in a sufficiently small neighborhood of the origin),
which gives the following asymptotic expression:
\begin{equation*}
  \left(\frac{2\pi}{s}\right)^{(n-1)/2} |\det \partial_{x'}^2 \Phi(0)|^{-1/2} e^{i\pi\, {\rm sgn}(\partial_{x'}^2 \Phi(0))/4} e^{i s \Phi(0)} F(0) 
  + O(s^{(n+1)/2}). 
\end{equation*}
Thus we arrive at~(\ref{stat-phase2}).
\end{proof}

\section{Asymptotic behavior of the function $u^f$}
Represent the integral in~(\ref{uf}) as the following sum
\begin{multline}
  \mathrm{v.p.}\int_{{\mathbb R}^d\times S^{n-1}\times (0,\infty)}
  \frac{e^{i(\langle{}x,\xi\rangle-\langle{}t, \sigma\rangle \rho\sqrt{\xi^2 + m^2})} \zeta(\rho) F(\xi,\sigma,\rho)}{1-\rho} \, d\xi dS_\sigma d\rho \\
  + \int_{{\mathbb R}^d\times S^{n-1}\times (0,\infty)}
  \frac{e^{i(\langle{}x,\xi\rangle-\langle{}t, \sigma\rangle \rho\sqrt{\xi^2 + m^2})} (1-\zeta(\rho))F(\xi,\sigma,\rho)}{1-\rho}  \, d\xi dS_\sigma d\rho,
  \label{sum-chi}
\end{multline}
where $\zeta(\rho)$ is a function from $C_0^\infty(0,+\infty)$ that is equal to unity in a neighborhood of $\rho=1$.
Substitution $\tau=\rho\sigma$ in the second term yields
\begin{equation*}
  \int_{{\mathbb R}^d\times{\mathbb R}^n}
  \frac{e^{i(\langle{}x,\xi\rangle-\langle{}t, \tau\rangle \sqrt{\xi^2 + m^2})} (\xi^2 + m^2)^{n/2-1} (1-\zeta(|\tau|))\hat f(\xi,\tau\sqrt{\xi^2 + m^2})}{1-\tau^2} \, d\xi d\tau.
\end{equation*}
For $x=s\theta$, $t=s\omega$,
the exponent of the exponential in the integrand takes the form
\begin{equation*}
  i s \Phi(\xi,\tau; \theta, \omega) = i s \left(\langle\theta,\xi\rangle - \langle\omega, \tau\rangle \sqrt{\xi^2 + m^2}\right).
\end{equation*}
Under the integral sign, this exponential is multiplied by an expression, which belongs to Schwartz space
if considered as a function of variables $(\xi,\tau)$.
Besides,
\begin{equation}
  |\partial_{\xi,\tau} \Phi| \geqslant |\partial_\tau \Phi| = \left|\omega \sqrt{\xi^2 + m^2}\right| \geqslant m,
  \label{ddtau}
\end{equation}
so Lemma~\ref{stat-triv} applies to the integral in consideration, which means that it is equal to $O(s^{-N})$ as $s\to\infty$
for arbitrary $N$.

Now turn to the first term in~(\ref{sum-chi}).
For $x=s\theta$, $t=s\omega$, it takes the following form
\begin{equation}
  \mathrm{v.p.}\int_{{\mathbb R}^d\times S^{n-1}\times (0,\infty)}
  \frac{1}{1-\rho}\, e^{i s \Phi(\xi,\sigma,\rho; \theta, \omega)}\, F_\zeta(\xi, \sigma, \rho) \,d\xi dS_\sigma d\rho,
  \label{vpint-sphere}
\end{equation}
where
\begin{gather}
  \Phi(\xi,\sigma,\rho; \theta, \omega) = \langle\theta,\xi\rangle - \langle\omega, \sigma\rangle\rho \sqrt{\xi^2 + m^2},   \label{Phi}\\
  F_\zeta(\xi, \sigma, \rho) = \zeta(\rho) F(\xi, \sigma, \rho). \notag
\end{gather}

Let $\gamma$ be a chart on the sphere $S^{n-1}$.
By $\gamma$ we will also denote the corresponding coordinates ranging an open subset of ${\mathbb R}^{n-1}$;
by $\partial_\gamma\Phi$ we denote the derivative
\begin{equation*}
  \partial_\gamma(\Phi(\xi,\sigma(\gamma),\rho; \theta, \omega)).
\end{equation*}
(In the case $n=1$, we assume that $\gamma=\sigma=\pm 1$.)
Let $\chi(\sigma)$ be a smooth function on the sphere $S^{n-1}$ supported in the domain of definition of the chart $\gamma$.
We pass from~(\ref{vpint-sphere}) to the integral of the same form for the function
\begin{equation*}
  F_{\chi\zeta}(\xi, \sigma, \rho) = \chi(\sigma) F_\zeta(\xi, \sigma, \rho).
\end{equation*}
This integral can be written as follows
\begin{equation}
  \mathrm{v.p.}\int_{{\mathbb R}^d\times{\mathbb R}^{n-1}\times{\mathbb R}} 
  \frac{1}{1-\rho}\, e^{i s \Phi(\xi,\sigma(\gamma),\rho; \theta, \omega)} F_{\chi\zeta}(\xi, \sigma(\gamma), \rho) J(\gamma) \,d\xi d\gamma d\rho,
  \label{vpint-sphere-local}
\end{equation}
where $J = \partial S_\sigma/\partial\gamma$ is the Jacobian determinant corresponding to the substitution $\sigma \mapsto \gamma(\sigma)$ 
(in the case $n=1$, the integral with respect to $\gamma$ equals the sum with respect to $\gamma=\pm 1$, and we also assume $J=1$, $\gamma(\sigma)=\sigma$). 
The set of integration here is larger than the domain of definition of the integrand.
This expression, however, makes sense as the integrand can be smoothly continued by zero to the set of integration.

In order to apply Lemma~\ref{stat-asymp}, we need to find points $(\xi,\sigma,\rho)$,
in which $\rho=1$, $\partial_\xi\Phi = 0$, and (in the case $n>1$) $\partial_\gamma\Phi = 0$.
For $n>1$ the last condition implies that $\sigma(\gamma) = \pm\omega$.
For $n=1$ the last equality holds automatically as $\omega$ and $\sigma$ take values $\pm 1$.
Next
\begin{equation}
  \partial_\xi\Phi = \theta - \frac{\langle\omega,\sigma\rangle \rho\, \xi}{\sqrt{\xi^2+m^2}} = \theta \mp \frac{\xi}{\sqrt{\xi^2+m^2}}.\label{dPhi}
\end{equation}
Hence the condition $\partial_\xi\Phi = 0$ means that
\begin{equation*}
  \xi^2 = \frac{m^2 \theta^2}{1-\theta^2},
  \quad \xi = \pm \frac{m\theta}{\sqrt{1-\theta^2}}.
\end{equation*}
Thus we find two critical points $(\xi,\sigma,\rho)$:
\begin{equation}
  (\xi, \sigma) = \pm \left(\frac{m\theta}{\sqrt{1-\theta^2}}, \omega\right), \quad \rho=1. \label{xis-crit}
\end{equation}

We can cover the sphere $S^{n-1}$ by a finite set of charts and choose corresponding functions $\chi(\sigma)$ in such a way that
their sum is identically unity on the sphere.
Then the integral~(\ref{vpint-sphere}) equals the sum of integrals of the form~(\ref{vpint-sphere-local}).
Consider first a chart $\gamma$, whose domain of definition does not contain the points $\pm\omega$ (this is necessary only in the case $n>1$).
We will check that the integral~(\ref{vpint-sphere-local}) equals $O(s^{-N})$ for any $N$ in this case.
In view of Lemma~\ref{stat-decay}, this is valid if $|\partial_{\xi,\gamma}\Phi| \geqslant c>0$ on the support of the integrand.
We have ($j\leqslant n-1$)
\begin{equation*}
  \partial_{\gamma_j}\Phi = -\langle\omega,\partial_{\gamma_j}\sigma\rangle \rho \sqrt{\xi^2+m^2}.
\end{equation*}
As $\sigma(\gamma) \ne \pm\omega$, we can pick $j$ such that $\langle\omega,\partial_{\gamma_j}\sigma\rangle \ne 0$,
since the hyperplane tangent to the sphere $S^{n-1}$ at the point $\sigma$ is not orthogonal to $\omega$.
Besides, on the support of the integrand, the variable $\rho$ is separated from zero,
hence $|\partial_{\gamma_j}\Phi|\geqslant c> 0$.
This means that Lemma~\ref{stat-decay} applies to the integral~(\ref{vpint-sphere-local}).

Thus, to describe the asymptotic behavior of the integral~(\ref{vpint-sphere}), it is sufficient to consider the integral~(\ref{vpint-sphere-local})
for charts $\gamma$, whose domain of definition contain $\pm\omega$.
Let the domain of definition of a chart $\gamma$ contain $\omega$, do not contain $-\omega$,
and, besides, let domains of definition of other charts of the atlas do not contain $\omega$. 
In particular, this implies that the corresponding function $\chi$ satisfies $\chi(\omega)=1$
(in the case $n=1$, when $\omega,\,\sigma=\pm 1$, we assume that $\chi(\omega)=1$, $\chi(-\omega)=0$).
Asymptotic properties of the integral~(\ref{vpint-sphere-local}) are determined by the behavior of the integrand
at the critical point $\varkappa = (\xi, \sigma, \rho)$ given by equality~(\ref{xis-crit}) with the sign $+$.
This is due to Lemma~\ref{stat-asymp}.
In order to apply the latter, however, we need to localize the integral~(\ref{vpint-sphere-local}) with respect to $\xi$.
To this end, we introduce one more smooth compactly supported cut-off function $\eta(\xi)$ that is equal to unity
in a neighborhood of the point
$\xi=m\theta/\sqrt{1-\theta^2}$ (see~(\ref{xis-crit}))
and decompose the integral~(\ref{vpint-sphere-local}) as follows
\begin{multline*}
  \mathrm{v.p.}\int_{{\mathbb R}^d\times{\mathbb R}^{n-1}\times{\mathbb R}} 
  \frac{-1}{\rho-1}\, e^{i s \Phi(\xi,\sigma(\gamma),\rho; \theta, \omega)}\, F_{\eta\chi\zeta}(\xi, \sigma(\gamma), \rho) J(\gamma) \,d\xi d\gamma d\rho \\
  + \mathrm{v.p.}\int_{{\mathbb R}^d\times{\mathbb R}^{n-1}\times{\mathbb R}} 
   \frac{-1}{\rho-1}\, e^{i s \Phi(\xi,\sigma(\gamma),\rho; \theta, \omega)}\, (1-\eta(\xi)) F_{\chi\zeta}(\xi, \sigma(\gamma), \rho) J(\gamma) \,d\xi d\gamma d\rho,
\end{multline*}
where
\begin{equation*}
  F_{\eta\chi\zeta}(\xi, \sigma, \rho) = \eta(\xi) F_{\chi\zeta}(\xi, \sigma, \rho).
\end{equation*}
To estimate the second term, we again apply Lemma~\ref{stat-decay}.
Turn to the hypothesis~(\ref{ddx'}) of this lemma.
As can be seen from the first equality in~(\ref{dPhi}),
if $\langle\omega,\sigma\rangle$ and $\rho$ are sufficiently close to unity, and $\xi$ is sufficiently large,
then we have $|\partial_\xi\Phi| \geqslant c > 0$, since $|\theta| < 1$.
To fulfill these conditions, we demand that:
the chart $\gamma$ on $S^{n-1}$ be determined in a sufficiently small neighborhood of the point $\omega$;
the function $\zeta(\rho)$ be nonzero only in a sufficiently small neighborhood of the point $\rho=1$;
the difference $1-\eta(\xi)$ be nonzero only for sufficiently large $\xi$.

We will treat the first term with the help of Lemma~\ref{stat-asymp}.
In the conditions of this lemma (more specifically, in~(\ref{ddx'ne0}), (\ref{ddx'0})), the critical point zero
should be substituted by $\varkappa$ in the present context.
Condition~(\ref{ddxne0}) reads $\partial_{\xi,\gamma,\rho}\Phi \ne 0$.
This relation follows, however, from~(\ref{ddtau}).
The first equality of condition~(\ref{ddx'0}) has already been checked.
Condition~(\ref{ddx'ne0}) says that the are no critical points other than $\varkappa$ in a neighborhood
of the support of the integrand.
This is valid in our situation, since the domain of definition of the chart $\gamma$ by assumption does not contain
$-\omega$, which corresponds to the second critical point in~(\ref{xis-crit}).

Finally, we turn to calculation of quantities occurring in~(\ref{stat-phase2}).
Choose the coordinate axes in such a way that 
\begin{equation*}
  \theta=|\theta| e_d\in{\mathbb R}^d, \quad \omega=e_n\in{\mathbb R}^n.
\end{equation*}
Then
\begin{equation*}
  \Phi(\xi,\sigma,\rho; \theta, \omega) = |\theta| \xi_d - \sigma_n\rho \sqrt{\xi^2 + m^2},
\end{equation*}
and the first equality in~(\ref{dPhi}) now reads
\begin{equation*}
  \partial_\xi\Phi = |\theta| e_d - \frac{\sigma_n \rho\xi}{\sqrt{\xi^2+m^2}}.
\end{equation*}
The critical point in consideration, which is $\varkappa$, is given by equalities
\begin{equation*}
  \xi = \frac{m|\theta| e_d}{\sqrt{1-\theta^2}}, \quad \sigma = e_n, \quad \rho=1.
\end{equation*}
We have
\begin{equation*}
  \Phi(\varkappa; \theta,\omega) = -m\sqrt{1-\theta^2}.
\end{equation*}
Next ($k,j \leqslant d$)
\begin{equation*}
  \partial_{\xi_k}\partial_{\xi_j} \Phi = \frac{-\sigma_n \rho \delta_k^j}{\sqrt{\xi^2+m^2}} + \frac{\sigma_n \rho \xi_k \xi_j}{(\xi^2+m^2)^{3/2}}.
\end{equation*}
At the point $\varkappa$, we have
\begin{multline*}
  \partial_{\xi_k}\partial_{\xi_j} \Phi(\varkappa; \theta,\omega) = \frac{-\delta_k^j}{\sqrt{\xi^2+m^2}} + \frac{\xi^2 \delta_k^d \delta_j^d}{(\xi^2+m^2)^{3/2}} =
  \frac{-\delta_k^j \sqrt{1-\theta^2}}{m} + \frac{\xi^2 \delta_k^d \delta_j^d}{(\xi^2+m^2)^{3/2}} \\
  =\frac{-\sqrt{1-\theta^2}}{m} (\delta_k^j - \theta^2 \delta_k^d \delta_j^d).
\end{multline*}
Next we calculate the derivatives of the function $\Phi$ with respect to $\gamma$.
This step is required only for $n>1$.
It is convenient to choose a chart $\gamma$ on the sphere in such a way that
\begin{equation}
  \sigma(\gamma) = (\gamma, \sqrt{1-\gamma^2}).
  \label{sg}
\end{equation}
We have ($k,j \leqslant n-1$)
\begin{gather*}
  \partial_{\gamma_j} \Phi = -\rho \sqrt{\xi^2 + m^2}\, \partial_{\gamma_j} \sigma_n =
  \rho \sqrt{\xi^2 + m^2}\, \frac{\gamma_j}{\sqrt{1-\gamma^2}}, \\
  \partial_{\gamma_k}\partial_{\gamma_j} \Phi = \rho \sqrt{\xi^2 + m^2} \left(\frac{\delta_k^j}{\sqrt{1-\gamma^2}} + \frac{\gamma_j \gamma_k}{(1-\gamma^2)^{3/2}}\right).
\end{gather*}
At the point $\varkappa$ we have $\gamma=0$, and so
\begin{equation*}
  \partial_{\gamma_k}\partial_{\gamma_j} \Phi(\varkappa; \theta,\omega) = \frac{m \delta_k^j}{\sqrt{1-\theta^2}}.
\end{equation*}
Next ($k\leqslant d$, $j\leqslant n-1$)
\begin{equation*}
  \partial_{\xi_k}\partial_{\gamma_j} \Phi =
  \frac{\rho \xi_k \gamma_j}{\sqrt{(\xi^2 + m^2)(1-\gamma^2)}},
  \quad
  \partial_{\xi_k}\partial_{\gamma_j} \Phi(\varkappa; \theta,\omega) = 0,
\end{equation*}
whence
\begin{equation*}
  \det \partial_{\xi,\gamma}^2 \Phi(\varkappa; \theta,\omega) = (\det \partial_\xi^2 \Phi \, \det \partial_\gamma^2 \Phi)(\varkappa; \theta,\omega)
\end{equation*}
(for $n=1$, we set by the definition $\partial_{\xi,\gamma}^2 \Phi = \partial_\xi^2 \Phi$,
$\det \partial_\gamma^2 \Phi=1$).
It follows from the equalities obtained above that
\begin{gather*}
  \det \partial_\xi^2 \Phi(\varkappa; \theta,\omega) = \left(\frac{-\sqrt{1-\theta^2}}{m}\right)^d (1-\theta^2), \quad
  \det \partial_\gamma^2 \Phi(\varkappa; \theta,\omega)  = \left(\frac{m}{\sqrt{1-\theta^2}}\right)^{n-1}.
\end{gather*}
We summarize our calculations by the following equalities
\begin{gather}
  \Phi(\varkappa; \theta,\omega) = -m\sqrt{1-\theta^2}, \quad
  |\det \partial_{\xi,\gamma}^2 \Phi(\varkappa; \theta,\omega)| = \frac{(1-\theta^2)^{(d-n+3)/2}}{m^{d-n+1}}, \notag\\
  {\rm sgn}(\partial_{\xi,\gamma}^2 \Phi(\varkappa; \theta,\omega)) = n-1-d.
  \label{Phi-deriv}
\end{gather}
We also need to find the sign of $\partial_\rho \Phi$ at $\varkappa$:
\begin{equation*}
  \partial_\rho \Phi(\varkappa; \theta,\omega) = -\sqrt{\xi^2+m^2} < 0.
\end{equation*}

Now find $J$ occurring in the integral~(\ref{vpint-sphere-local}).
We have ($j\leqslant n-1$)
\begin{equation*}
  \partial_j\sigma = e_j + e_n \partial_j \sigma_n = e_j - \frac{\gamma_j e_n}{\sqrt{1-\gamma^2}},
\end{equation*}
therefore, at the point $\varkappa$, in which $\gamma=0$, we have
\begin{equation*}
  J(0) = |\det \{\langle\partial_j\sigma(0), \partial_k\sigma(0)\rangle\}_{j,k}|^{1/2} = 1.
\end{equation*}
It remains to find the value of the function $F_{\eta\chi\zeta}$ at $\varkappa$: 
\begin{equation*}
  F_{\eta\chi\zeta}(\varkappa) = F(\varkappa) =
  \frac{m^{n-2}}{2(1-\theta^2)^{n/2-1}}\, \hat f\left(\frac{m\theta}{\sqrt{1-\theta^2}}, \frac{m\omega}{\sqrt{1-\theta^2}}\right)
\end{equation*}
(we have taken into account that $\eta=\chi=\zeta=1$ at this point).
Thus we derive the following asymptotic formula for the integral~(\ref{vpint-sphere-local}) 
as $s \to +\infty$:
\begin{equation*}
  i\pi\, \left(\frac{2\pi}{s}\right)^{(d+n-1)/2} \frac{m^{(d-n+1)/2}}{(1-\theta^2)^{(d-n+3)/4}} e^{i\pi\, (n-d-1)/4} e^{-i s\, m\sqrt{1-\theta^2}} F(\varkappa) \\
  + O(s^{(d+n+1)/2}).
\end{equation*}
The leading term here can be written as
\begin{equation*}
  \frac{(2\pi)^{d+n}}{s^{(d+n-1)/2}}\, e^{-i s\, m\sqrt{1-\theta^2}} U_-^f(\theta,\omega),
\end{equation*}
where
\begin{multline*}
  U_-^f(\theta, \omega) =
  i\pi\, (2\pi)^{-(d+n+1)/2} \frac{m^{(d-n+1)/2}}{(1-\theta^2)^{(d-n+3)/4}} e^{i\pi\, (n-d-1)/4} F(\varkappa) \\
  = \frac{e^{i\pi (n-d+1)/4}\, m^{(d+n-3)/2}}{4(2\pi)^{(d+n-1)/2} (1-\theta^2)^{(d+n-1)/4}}\, \hat f\left(\frac{m\theta}{\sqrt{1-\theta^2}}, \frac{m\omega}{\sqrt{1-\theta^2}}\right).
\end{multline*}

Consider the second critical point in~(\ref{xis-crit}), which will be denoted by $\varkappa'$.
Similarly to~(\ref{sg}), we choose coordinates $\gamma$ as follows
\begin{equation*}
  \sigma(\gamma) = (\gamma, -\sqrt{1-\gamma^2}).   
\end{equation*}
Then the calculations analogous to those performed previously yield the equalities
\begin{gather}
  \Phi(\varkappa'; \theta, \omega) = m\sqrt{1-\theta^2}, \quad
  |\det \partial_{\xi,\gamma}^2 \Phi(\varkappa'; \theta, \omega)| = \frac{(1-\theta^2)^{(d-n+3)/2}}{m^{d-n+1}}, \notag\\
  {\rm sgn}(\partial_{\xi,\gamma}^2 \Phi(\varkappa'; \theta, \omega)) = d-n+1
  \label{Phi-deriv2}
\end{gather}
and, besides,
\begin{equation*}
  \partial_\rho \Phi(\varkappa'; \theta, \omega) = \sqrt{\xi^2+m^2} > 0.
\end{equation*}
So the leading term in the asymptotic expansion of the integral~(\ref{vpint-sphere-local}) can be written as
\begin{equation*}
  \frac{(2\pi)^{d+n}}{s^{(d+n-1)/2}}\, e^{i s\, m\sqrt{1-\theta^2}} U_+^f(\theta,\omega),
\end{equation*}
where
\begin{equation*}
  U_+^f(\theta, \omega)
  = \frac{e^{i\pi (d-n-1)/4}\, m^{(d+n-3)/2}}{4(2\pi)^{(d+n-1)/2} (1-\theta^2)^{(d+n-1)/4}}\, \hat f\left(\frac{-m\theta}{\sqrt{1-\theta^2}}, \frac{-m\omega}{\sqrt{1-\theta^2}}\right).
\end{equation*}
We can rewrite the expressions for $U^f_\pm$ in the following form
\begin{equation*}
  U_\pm^f(\theta, \omega)
  = \frac{e^{\pm i\pi (d-n-1)/4}}{4 m} \left(\frac{m}{2\pi\sqrt{1-\theta^2}}\right)^{(d+n-1)/2}\, \hat f\left(\frac{\mp m\theta}{\sqrt{1-\theta^2}}, \frac{\mp m\omega}{\sqrt{1-\theta^2}}\right).
\end{equation*}
Relation~(\ref{apm-f}) is a consequence of this formula.

\section{Asymptotic behavior of the function $u^a$} 
For $x(s) = s\theta$, $t(s) = s\omega$, we have
\begin{gather}
  u^a(s\theta, s\omega) = (2\pi)^{-d-n} \int_{{\mathbb R}^d\times S^{n-1}} e^{i s\Phi(\xi,\sigma; \theta, \omega)} A(\xi,\sigma) d\xi dS_\sigma, \notag \\
  \Phi(\xi,\sigma; \theta, \omega) = \langle\theta,\xi\rangle - \langle\omega, \sigma\rangle \sqrt{\xi^2 + m^2}. \label{PhiA}
\end{gather}
To describe asymptotic properties of this integral as $s\to+\infty$, we apply the method of stationary phase.
The phase function $\Phi$ coincides with the function defined in~(\ref{Phi}) restricted to the set $\{\rho=1\}$.
This allows us to use the results of calculations from the previous section.
In particular, the critical points $(\xi,\sigma)$, in which $\partial_{\xi,\sigma}\Phi=0$,
are given by the first equality in~(\ref{xis-crit}).
The values of the function $\Phi$ and its derivatives at critical points, which are required for asymptotic analysis, 
are given by formulas~(\ref{Phi-deriv}), (\ref{Phi-deriv2}).
The justification of the method of stationary phase (localization with respect to $\xi$, passing to local coordinates on the sphere $S^{n-1}$)
is quite analogous to that made in the previous section,
so we will not expose it here.

Thus the function $u^a$ has the asymptotic property of the form~(\ref{ampl}) with the following coefficients 
\begin{equation*}
  U^a_\pm(\theta, \omega)
  = \frac{e^{\pm i\pi (d-n+1)/4}}{(2\pi)^{(d+n+1)/2}} \frac{m^{(d-n+1)/2}}{(1-\theta^2)^{(d-n+3)/4}}\, A\left(\frac{\mp m\theta}{\sqrt{1-\theta^2}}, \mp\omega\right).
\end{equation*}
This relation can be written in terms of the density $a(\xi,\tau)$ as follows
\begin{equation*}
  U^a_\pm(\theta, \omega)
  = \frac{e^{\pm i\pi (d-n+1)/4}}{4\pi m} \left(\frac{m}{2\pi \sqrt{1-\theta^2}}\right)^{(d+n-1)/2}\,
  a\left(\frac{\mp m\theta}{\sqrt{1-\theta^2}}, \frac{\mp m\omega}{\sqrt{1-\theta^2}}\right).
\end{equation*}
Relation~(\ref{apm}) follows from this formula.

The formulas for $U^f_\pm$, $U^a_\pm$ obtained above prove Theorem~\ref{thm-asymp}.
Formula~(\ref{Upm}) allows expressing the density $a$ via the function $\hat f$ and one of the functions $U_\pm$.
This leads to the relation given in the formulation of Theorem~\ref{thm-exist}.
Inclusion $a\in \mathcal{S}(\Sigma_m)$ follows from this relation.
The density $a$ is determined uniquely at every point of $\Sigma_m$, which leads to the uniqueness of solution of the form~(\ref{uint})
that has the asymptotic property~(\ref{ampl}) with a given coefficient $U_+$ (or $U_-$).
Thus Theorem~\ref{thm-exist} is proved.

\section{Asymptotic behavior of the function $u$ along chara\-cte\-ris\-tic directions}
Now we discuss the behavior of solution~(\ref{uint}) when $(x,t)$ tends to infinity along a chara\-cte\-ris\-tic direction,
which will be parameterized by unit vectors $(\theta,\omega)\in S^{d-1} \times S^{n-1}$ and a parameter $q\in{\mathbb R}$:
\begin{equation}
  x(s) = (s+q)\theta, \quad t(s) = s\omega.
  \label{xsts-char}
\end{equation}
For simplicity, we will assume $f=0$.
Then $u=u^a$, and we need to examine the behavior of the integral
\begin{equation*}
  u(x(s), t(s)) = (2\pi)^{-d-n} \int_{{\mathbb R}^d\times S^{n-1}} e^{i s\Phi(\xi,\sigma; \theta, \omega)} e^{i q \langle\theta,\xi\rangle} a(\xi,\sigma) \, d\xi dS_\sigma
\end{equation*}
as $s\to\infty$, where the function $\Phi$ is given by~(\ref{PhiA}).
The product $e^{i q \langle\theta,\xi\rangle} a(\xi,\sigma)$ considered as a function of variables $(\xi,\sigma)$ belongs to Schwartz space.
Next
\begin{equation*}
  |\partial_\xi\Phi| = \left|\theta - \frac{\langle\omega,\sigma\rangle \xi}{\sqrt{\xi^2+m^2}}\right| \geqslant 1 - \frac{|\xi|}{\sqrt{\xi^2+m^2}}
  \geqslant \frac{C_m}{1+|\xi|}.
\end{equation*}
This means that Lemma~\ref{stat-triv} applies to the integral in the preceding formula
(after sub\-sti\-tu\-ting $\sigma$ by local coordinates), since condition~(\ref{ddPhi}) is satisfied for $M=-1$.
Thus for any $N$ we have
\begin{equation*}
  u(x(s), t(s)) = O(s^{-N}), \quad s\to\infty.
\end{equation*}

A particular case of equation~(\ref{eqn}) is Klein-Gordon-Fock equation ($n=1$, $f=0$), for which
the Cauchy problem with the initial data given, for example, on the hypersurface $\{t=0\}$:
\begin{equation*}
  u|_{t=0} = u_0, \quad u|_{t=0} = u_1,
\end{equation*}
is well-posed.
If the initial data $u_0$, $u_1$ belong to $\mathcal{S}({\mathbb R}^d)$, then
the solution to this problem can be written in the form~(\ref{uaint}) with the following density $A(\xi,\sigma)$ (in the present context, $\sigma=\pm 1$)
\begin{equation*}
  A(\xi, \pm 1) = \frac{\hat u_0(\xi)}{2} \pm \frac{i \hat u_1(\xi)}{2\sqrt{\xi^2+m^2}}.
\end{equation*}
(In the case under consideration, when $n=1$, formula~(\ref{uaint}) with such a function $A$ can,
of course, be obtained by the standard Fourier method.)
Thus our results apply to a solution to the Cauchy problem.
In particular, the last equality together with formula~(\ref{Upm}) imply that the asymptotic behavior
of the solution along a timelike direction is described in terms of the Fourier transform of the initial data.
However, the solution decays faster than any power of distance and time along 
a characteristic direction. 
Thus we see a distinct contrast between behavior of solutions to Klein-Gordon-Fock equation and that of the wave equation.
Solutions to the analogous Cauchy problem for the wave equation decay rapidly along timelike directions
(for odd $d$, this follows from Huygens principle), and exhibit nontrivial asymptotic behavior along characteristic directions.
In the case $d=3$, the asymptotic behavior is described by the following relation~\cite{Blag-scatter}
\begin{equation*}
  u(x(s), t(s)) = \frac{1}{4\pi s} \left(-\partial_q (R u_0(\theta,q)) + R u_1(\theta,q)\right) + o(s^{-1}), \quad s\to \pm\infty,
\end{equation*}
in which $x(s)$, $t(s)$ are given by~(\ref{xsts-char}), $R$ is Radon transform:
\begin{equation*}
  R v(\theta,q) = \int_{\langle{}x,\theta\rangle=q} v(x) \,dS_x.
\end{equation*}
Thus in the case of the wave equation, the behavior of a solution at infinity is described in terms
of Radon transform of the initial data (rather than Fourier transform).

\end{document}